\newcommand\footnoteref[1]{\protected@xdef\@thefnmark{\ref{#1}}\@footnotemark}
\definecolor{grey}{rgb}{0.95,0.95,0.95}
\definecolor{green}{rgb}{0.2,0.6,0.4}
\newcommand{\imp}{\rightarrow}
\newcommand{\Pb}{\mathbb{P}}
\newcommand{\Psf}{\mathsf{P}}
\newcommand{\Qsf}{\mathsf{Q}}
\newcommand{\Ccal}{\mathcal{C}}
\newcommand{\Ical}{\mathcal{I}}
\newcommand{\Mcal}{\mathcal{M}}
\newcommand{\Qcal}{\mathcal{Q}}
\newcommand{\Rcal}{\mathcal{R}}
\newcommand{\Scal}{\mathcal{S}}
\newcommand{\Kcal}{\mathcal{K}}
\renewcommand{\setminus}{\smallsetminus}
\newcommand{\subseteqfin}{\subseteq_{\tiny \texttt{fin}}}
\newcommand{\tuple}[1]{\left\langle #1 \right\rangle}
\newcommand{\s}[1]{\ensuremath{\sf{#1}}}
\newcommand{\rca}{\s{RCA}_0}
\newcommand{\aca}{\s{ACA}_0}
\newcommand{\wkl}{\s{WKL}_0}
\newcommand{\kl}{\s{KL}}
\newcommand{\rt}{\s{RT}}
\newcommand{\rrt}{\s{RRT}}
\newcommand{\ads}{\s{ADS}}
\newcommand{\sads}{\s{SADS}}
\newcommand{\scac}{\s{SCAC}}
\newcommand{\coh}{\s{COH}}
\newcommand{\pizog}{\Pi^0_1\s{G}}
\renewcommand{\ts}{\s{TS}}
\newcommand{\sts}{\s{STS}}
\newcommand{\fs}{\s{FS}}
\newcommand{\emo}{\s{EM}}
\newcommand{\amt}{\s{AMT}}
\title{Iterative forcing and hyperimmunity\\ in reverse mathematics}
\author{
  Ludovic Patey
}
\institute{
Laboratoire PPS, Universit\'e Paris Diderot, Paris, FRANCE\\
\email{ludovic.patey@computability.fr}
}
\date{\today}
\begin{document}

\maketitle

\begin{abstract}
The separation between two theorems in reverse mathematics
is usually done by constructing a Turing ideal satisfying a theorem P
and avoiding the solutions to a fixed instance of a theorem Q.
Lerman, Solomon and Towsner introduced a forcing technique
for iterating a computable non-reducibility in order to separate
theorems over omega-models. In this paper, we present a modularized
version of their framework in terms of preservation of hyperimmunity
and show that it is powerful enough
to obtain the same separations results as Wang did with
his notion of preservation of definitions.
\end{abstract}

\section{Introduction}

Reverse mathematics is a mathematical program which aims to capture the provability
content of ordinary (i.e. non set-theoretic) theorems. It uses the framework of subsystems of second-order
arithmetic, with a base theory~$\rca$ which is composed of the basic axioms of Peano arithmetic
together with the~$\Delta^0_1$ comprehension scheme and the~$\Sigma^0_1$ induction scheme.
Thanks to the equivalence between~$\Delta^0_1$-definable sets and computable sets, 
$\rca$ can be thought as capturing ``computational mathematics''.
See~\cite{Hirschfeldt2013Slicing} for a good introduction.

Many theorems are~$\Pi^1_2$ statements~$(\forall X)(\exists Y)\Phi(X,Y)$
and come with a natural class of~\emph{instances}~$X$. The sets~$Y$
such that~$\Phi(X,Y)$ holds are~\emph{solutions} to~$X$.
For example, König's lemma ($\kl$) states that every infinite, finitely branching
tree has an infinite path. An instance of~$\kl$ is an infinite, finitely branching tree~$T$.
A solution to~$T$ is an infinite path through~$T$.
Given two~$\Pi^1_2$ statements~$\Psf$ and~$\Qsf$, proving an implication~$\Qsf \imp \Psf$
over~$\rca$ consists in taking a~$\Psf$-instance $X$ and constructing a solution to~$X$
through a computational process involving several applications of the~$\Qsf$ statement.
Empirically, many proofs of implications are in fact~\emph{computable reductions}~\cite{Hirschfeldtnotions}.

\begin{definition}[Computable reducibility]
Fix two~$\Pi^1_2$ statements~$\Psf$ and~$\Qsf$.
We say that $\Psf$ is \emph{computably reducible} to~$\Qsf$ (written $\Psf \leq_c \Qsf$)
if every~$\Psf$-instance~$I$ computes a~$\Qsf$-instance~$J$ such that for every solution~$X$ to~$J$,
$X \oplus I$ computes a solution to~$I$.
\end{definition}

If the computable reduction between from~$\Psf$ to~$\Qsf$ can be formalized over~$\rca$, then
$\rca \vdash \Qsf \imp \Psf$. However, $\Psf$ may not be computably reducible
to~$\Qsf$ while~$\rca \vdash \Qsf \imp \Psf$. Indeed, one may need more than one application
of~$\Qsf$ to solve the instance of~$\Psf$. This is for example the case of Ramsey's theorem for
pairs with~$n$ colors ($\rt^2_n$) which implies~$\rt^2_{n+1}$ over~$\rca$, but~$\rt^2_{n+1} \not \leq_c \rt^2_n$
for~$n \geq 1$ (see~\cite{Patey2015weakness}).

In order to prove the non-implication between~$\Psf$ and~$\Qsf$, one needs to iterate
the computable non-reducibility in order to build a model of~$\Qsf$ which is not a model of~$\Psf$.
This is the purpose of the framework developed by Lerman, Solomon and Towsner in~\cite{Lerman2013Separating}.
They successfully used their framework for separating the Erd\H{o}s-Moser theorem ($\emo$)
from the stable ascending descending sequence principle~($\sads$)
and separating the ascending descending sequence~($\ads$) from the stable chain
antichain principle~($\scac$). Their approach has been reused by Flood \& Towsner~\cite{Flood2014Separating}
and the author~\cite{Patey2015Ramsey} on diagonal non-computability statements.

However, their framework suffers some drawbacks. In particular
the forcing notions involved are heavy and
the deep combinatorics witnessing the non-implications are hidden by the complexity of the proof.
As well, the~$\Psf$-instance chosen in the ground forcing depends on the forcing notion
used in the iteration forcing and therefore the overall construction is not modular.
On the other side, Wang~\cite{Wang2014Definability} recently
introduced the notion of preservation of definitions 
and made independent proofs of preservations
for various statements included~$\emo$. Then he deduced that the conjunction
of those statements does not imply~$\sads$, therefore strengthening
the result of Lerman, Solomon \& Towsner in a modular way.
Variants of this notion have been reused by the author~\cite{Patey2015weakness} for separating the free set theorem~($\fs$)
from~$\rt^2_2$.

In this paper, we present a modularized version of the framework
of Lerman, Solomon \& Towsner and use it to reprove the separation results
obtained by Wang~\cite{Wang2014Definability}. We thereby show that this framework
is a viable alternative to the notion introduced by Wang
for separating statements in reverse mathematics. In particular, we reprove the following theorem,
in which $\coh$ is the cohesiveness principle, $\wkl$ is the weak König's lemma,
$\rrt^2_2$ the rainbow Ramsey theorem for pairs, $\pizog$ the~$\Pi^0_1$-genericity principle
and~$\sts^2$ the stable thin set theorem for pairs.

\begin{theorem}[Wang~\cite{Wang2014Definability}]\label{thm:wang-theorem}
Let~$\Phi$ be the conjunction of~$\coh$, $\wkl$, $\rrt^2_2$,
$\pizog$, and~$\emo$. Over~$\rca$, $\Phi$
does not imply any of~$\sads$ and $\sts^2$.
\end{theorem}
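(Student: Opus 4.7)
The plan is to introduce a notion of \emph{preservation of hyperimmunity}, prove that each of the five principles in the conjunction $\Phi$ enjoys it, and exhibit instances of $\sads$ and $\sts^2$ which do not. Say that a $\Pi^1_2$ statement $\Psf$ preserves hyperimmunity if, for every function $f$ hyperimmune relative to a set $Y$ and every $\Psf$-instance $X \leq_T Y$, there is a solution $Z$ to $X$ such that $f$ remains $Y \oplus Z$-hyperimmune. This notion is closed under iteration in the standard $\omega$-model sense: a countable sequence of applications of hyperimmunity-preserving principles can be arranged inside a Turing ideal in which a fixed function $f$ stays hyperimmune relative to every set. If each of $\coh$, $\wkl$, $\rrt^2_2$, $\pizog$ and $\emo$ preserves hyperimmunity, and if some instances of $\sads$ and $\sts^2$ have no solution preserving the hyperimmunity of a fixed $f$, a bookkeeping construction produces an $\omega$-model of $\rca + \Phi$ in which both $\sads$ and $\sts^2$ fail.

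For the five principles I would give one dedicated forcing argument each. For $\wkl$, the standard tree forcing already preserves much stronger arithmetic notions, so preservation of hyperimmunity follows by a direct adaptation: given a tree $T \leq_T Y$, one forces with a subtree while ensuring by density that no functional applied to the generic dominates $f$. For $\coh$, a Mathias forcing along the cohesive question yields the solution, and the domination-avoiding density argument is a routine splitting lemma. For $\pizog$, a Jockusch--Soare-style forcing with $\Pi^0_1$ classes gives a generic whose computational content is tightly controlled, and the preservation property is again straightforward. For $\rrt^2_2$, I would build on the existing constructions that produce rainbows computing no Turing-complete object and refine them to additionally avoid dominating $f$. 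The delicate case is $\emo$: the Mathias-like conditions that force an infinite transitive subtournament are restrictive, and the density argument for hyperimmunity preservation must be woven into the combinatorial control of the unused colors. I expect this to be the technical heart of the paper.

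For the non-preservation side, one exhibits, for each of $\sads$ and $\sts^2$, a computable instance every solution to which computes a function dominating a prescribed hyperimmune $f$: the classical construction of a stable linear order of type $\omega + \omega^*$ whose ascending and descending parts encode approximations to a chosen dominating function handles $\sads$, and an analogous construction for stable pair colorings handles $\sts^2$. The iteration step is then routine: build a countable Turing ideal $\Ical = \bigcup_n \Ical_n$ as an increasing union of Turing-closed countable sets, at each stage solving an enumerated instance of one of the five principles while preserving the hyperimmunity of $f$; the limit $\Ical$ satisfies $\Phi$ by construction, yet the two bad instances sit in $\Ical$ but have no solution there. The main obstacle throughout will be the hyperimmunity preservation proof for $\emo$, whose Erd\H{o}s--Moser combinatorics leaves the least room to maneuver.
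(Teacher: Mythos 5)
Your overall strategy matches the paper's: isolate a preservation notion cast in terms of hyperimmunity, prove each component of $\Phi$ preserves it, show $\sads$ and $\sts^2$ fail it, then run a Turing-ideal iteration. But your definition of the preservation notion is too weak, and the gap is exactly where the separation has to happen.

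You formulate preservation for a \emph{single} hyperimmune $f$: a solution $Y$ must keep a fixed $f$ hyperimmune relative to $Y\oplus Z$. The paper's definition quantifies over a \emph{countable collection} of hyperimmune sets $B_0, B_1, \dots$ that must all survive simultaneously. This is not cosmetic. The $\sads$ failure instance (Hirschfeldt--Shore--Slaman) is a stable linear order whose $\omega$-part $B_0$ and $\omega^*$-part $B_1$ are \emph{both} hyperimmune; an ascending sequence is an infinite subset of $B_0$ and destroys $B_0$'s hyperimmunity, a descending sequence lives in $B_1$ and destroys $B_1$'s. To rule out every solution you must carry both $B_0$ and $B_1$ through the entire iteration. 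A single $f$ is not dominated by both $p_{B_0}$ and $p_{B_1}$ unless you separately argue such an $f$ exists and is hyperimmune, which you do not do and which collapses entirely for $\sts^2$: there the failure instance yields \emph{countably many} hyperimmune sets $B_i = \{n : \lim_s f(n,s)\neq i\}$, one per color, and any $f$-thin solution is an infinite subset of some $B_i$. There is no obvious way to compress preservation of all the $p_{B_i}$ into preservation of a single function, and you would in any case then have to show that domination of that composite function implies domination of each $p_{B_i}$, which is false. You need the definition to allow a countable family of witnesses from the outset, and all your individual forcing lemmas must carry that family, not one $f$.

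There are also two smaller divergences worth noting. First, the paper does not give fresh forcing arguments for $\pizog$ and $\rrt^2_2$; it proves once that sufficiently Cohen-generic and sufficiently random reals preserve hyperimmunity of any countable family, and then reads off $\pizog$ and $\rrt^2_2$ (along with $\amt$) as corollaries, which is simpler and more uniform than ``refining existing rainbow constructions.'' Second, for $\emo$ the paper does not weave hyperimmunity preservation into ad-hoc combinatorics; it exploits modularity by invoking the already-proved $\wkl$ preservation as a black box inside the density lemma (the ``not essential'' case produces a nonempty $\Pi^{0,1}_1$ class of partitions, and $\wkl$-preservation picks a member keeping the $B$'s hyperimmune). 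Your plan for $\emo$ is vague precisely at the point where the paper's modular reuse of $\wkl$ does the work.
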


In section~\ref{sect:framework}, we introduce the framework of Lerman, Solomon \& Towsner
in its original form and detail its drawbacks. Then, in section~\ref{sect:hyperimmunity},
we develop a modularized version of their framework. In section~\ref{sect:basic-hyperimmunity},
we establish basic preservation results, before reproving in section~\ref{sect:em-hyperimmunity}
Wang's theorem. Last, we reprove in section~\ref{sect:ts-hyperimmunity} 
the separation obtained by the author in~\cite{Patey2015weakness}.

\subsection{Notations}

\emph{String, sequence}.
Fix an integer $k \in \omega$.
A \emph{string} (over $k$) is an ordered tuple of integers $a_0, \dots, a_{n-1}$
(such that $a_i < k$ for every $i < n$). The empty string is written~$\varepsilon$. A \emph{sequence}  (over $k$)
is an infinite listing of integers $a_0, a_1, \dots$ (such that $a_i < k$ for every $i \in \omega$).
Given $s \in \omega$,
$k^s$ is the set of strings of length $s$ over~$k$. As well,
$k^{<\omega}$ is the set of finite strings over~$k$. 
Given a string $\sigma \in k^{<\omega}$, we denote by $|\sigma|$ its length.
Given two strings $\sigma, \tau \in k^{<\omega}$, $\sigma$ is a \emph{prefix}
of $\tau$ (written $\sigma \preceq \tau$) if there exists a string $\rho \in k^{<\omega}$
such that $\sigma \rho = \tau$.
A \emph{binary string} (resp. real) is a \emph{string} (resp. sequence) over $2$.
We may equate a real with a set of integers by considering that the real is its characteristic function.

\emph{Tree, path}.
A tree $T \subseteq \omega^{<\omega}$ is a set downward closed under the prefix relation.
The tree~$T$ is \emph{finitely branching} if every node~$\sigma \in T$
has finitely many immediate successors.
A \emph{binary} tree is a tree~$T \subseteq 2^{<\omega}$.
A set $P \subseteq \omega$ is a \emph{path} though~$T$ if for every $\sigma \prec P$,
$\sigma \in T$. A string $\sigma \in k^{<\omega}$ is a \emph{stem} of a tree $T$
if every $\tau \in T$ is comparable with~$\sigma$.
Given a tree $T$ and a string $\sigma \in T$,
we denote by $T^{[\sigma]}$ the subtree $\{\tau \in T : \tau \preceq \sigma \vee \tau \succeq \sigma\}$.

\emph{Sets}. Given two sets~$X$ and~$Y$, $X \subseteq^{*} Y$ means that~$X$ is almost included into~$Y$,
$X =^{*} Y$ means~$X \subseteq^{*} Y \wedge Y \subseteq^{*} X$ and $X \subseteqfin Y$ means that~$X$
is a finite subset of~$Y$. Given some~$x \in \omega$, $A > x$ denotes the formula~$(\forall y \in A)[y > x]$.

\section{The iteration framework}\label{sect:framework}

An \emph{$\omega$-structure} is a structure $\Mcal = (\omega, S, +, \cdot, <)$
where $\omega$ is the set of standard integers, $+$, $\cdot$ and $<$ 
are the standard operations over integers and $S$ is a set of reals
such that~$\Mcal$ satisfies the axioms of~$\rca$.
Friedman~\cite{Friedman1974Some} characterized the second-order parts~$S$ of~$\omega$-structures
as those forming a \emph{Turing ideal}, that is, a set of reals closed under Turing join
and downward-closed under Turing reduction.

Fix two $\Pi^1_2$ statements~$\Psf$ and $\Qsf$.
The construction of an~$\omega$-model of~$\Psf$ which is not a model of~$\Qsf$
consists in creating a Turing ideal~$\Ical$ together with a fixed $\Qsf$-instance $I \in \Ical$,
such that every $\Psf$-instance $J \in \Ical$ has a solution in~$\Ical$,
whereas $I$ contains no solution in~$\Ical$.
In the first place, let us just focus on the one-step case, that is, a proof that~$\Qsf \not \leq_c \Psf$.
To do so, one has to choose carefully some~$\Qsf$-instance~$I$ such that
every $I$-computable $\Psf$-instance has a solution $X$ which does not~$I$-compute
a solution to~$I$.
The construction of a solution~$X$ to some~$I$-computable $\Psf$-instance~$J$ will have to satisfy
the following scheme of requirements for each index~$e$:
\[
\Rcal_e : \Phi^{X \oplus I}_e \mbox{ infinite } \imp \Phi^{X \oplus I}_e \mbox{ is not a solution to } I
\]
Such requirements may not be satisfiable for an arbitrary $\Qsf$-instance~$I$.
The choice of the instance and the satisfaction of the requirement is strongly dependent
on the combinatorics of the statement~$\Qsf$ and the forcing notion used for constructing
a solution to~$J$. A recurrent approach in the framework
of Lerman, Solomon \& Towsner consists in constructing a $\Qsf$-instance~$I$
which satisfies some fairness property.
The forcing notion~$\Pb^I$ used in the construction of a solution to~$J$
is usually designed so that
\begin{itemize}
	\item[(i)] There exists an $I$-computable set encoding (at least) every condition in~$\Pb^I$
	\item[(ii)] Given some forcing condition in~$\Pb^I$, one can
	uniformly find in a c.e.\ search a finite set of candidate extensions
	such that one of them is in~$\Pb^I$ (e.g.\ the notion of split pair in~\cite{Lerman2013Separating},
of finite cover for a tree forcing, ...).
\end{itemize}
The fairness property states the following:

\smallskip
{\itshape For every condition in~$\Pb^I$,
if for every~$x \in \omega$, there exists a \emph{finite} $\Qsf$-instance~$A > x$
and a finite set of candidate extensions~$d_0, \dots, d_m$ such that~$\Phi^{d_i \oplus I}_e$ is not a
solution to~$A$ for each~$i \leq m$, then one of the~$A$'s is a subinstance of~$I$.}
\smallskip

This property is designed so that we can satisfy it by taking 
each condition~$c \in \Pb^I$ one at a time, find some finite $\Qsf$-instance $A$ on which~$I$
is not yet defined, and define~$I$ over~$A$.
One can think of the instance~$I$ as a fair adversary who, if we have infinitely often
the occasion to beat him, will be actually beaten at some time.

Suppose now we want to extend this computable non-reducibility into
a separation over~$\omega$-structures. One may naturally try to make
the instance~$I$ satisfy the fairness property at every level
of the iteration forcing. At the first iteration with an~$I$-computable~$\Psf$-instance~$J$, the property is unchanged.
At the second iteration, the~$\Psf$-instance~$J_1$ is~$X_0 \oplus I$-computable,
but the set~$X_0$ is not yet constructed. Hopefully, the fairness property requires a finite piece of oracle~$X_0$.
Therefore we can modify the fairness property which becomes

\smallskip
{\itshape For every condition~$c_0 \in \Pb^I$ and every condition~$c_1 \in \Pb^{c_0 \oplus I}$,
if for every~$x \in \omega$, there exists a $\Qsf$-instance~$A > x$,
a finite set of candidate extensions~$d_0, \dots, d_m \in \Pb^I$
and~$d_{0,i}, \dots, d_{n_i,i} \in \Pb^{d_i \oplus I}$ for each~$i \leq m$ such that
$\Phi^{d_{j,i} \oplus d_i \oplus I}_e$ is not a solution to~$A$ 
for each~$i \leq m$ and~$j \leq n_i$, then one of the~$A$'s is a subinstance of~$I$.}
\smallskip

Since this property becomes overly complicated in the general case, Lerman, Solomon and Towsner
abstracted the notion of requirement and made it a $\Sigma^{0,I}_1$ black box
which takes as parameters a condition and a finite~$\Qsf$-instance.
Instead of making the instance~$I$ in charge of satisfying the fairness property at every level
of the iteration forcing, the instance~$I$ satisfies the property only at the first level. Then, by encoding a requirement
at the next level into a requirement at the current level, the iteration forcing
ensures the propagation of this fairness property from the first level to every level.
The property in its abstracted form is then

\smallskip
{\itshape For every condition in~$\Pb^I$ and every $\Sigma^{0,I}_1$ predicate~$\Kcal^I$,
if for every~$x \in \omega$, there exists a \emph{finite} $\Qsf$-instance~$A > x$
and a finite set of candidate extensions~$d_0, \dots, d_m$ such that~$\Kcal^I(A, d_i)$ is satisfied for each~$i \leq m$, 
then one of the~$A$'s is a subinstance of~$I$.}
\smallskip

In particular, by letting~$\Kcal^I(A, c)$ be the predicate ``$\Phi^{d_i \oplus I}_e$
is not a solution to~$A$'', the requirements~$\Rcal_e$ will be satisfied.

The problem of such an approach is that the construction of the $\Qsf$-instance
strongly depends on the forcing notion used in the iteration forcing.
A slight modification of the latter requires to modify the ground forcing.
As well, if someone wants to prove that the conjunction of two statements
does not imply a third one, we need to construct an instance~$I$ which will
satisfy the fairness property for the two statements, and in each iteration forcing,
we will need to ensure that both properties are propagated to the next iteration.
The size of the overall construction explodes when trying to make a separation of the conjunction
of several statements at the same time.

\section{Preservation of hyperimmunity}\label{sect:hyperimmunity}

In this section, we propose a general simplification
of the framework of Lerman, Solomon \& Towsner~\cite{Lerman2013Separating}
and illustrate it in the case of the separation of~$\emo$ from~$\sads$.
The corresponding fairness property happens to coincide with the notion of hyperimmunity.
The underlying idea ruling this simplification is the following:
since each condition in the iteration forcing can be given an index
and since the finite set of candidate extensions of a condition~$c$, 
can be found in a c.e.\ search, given a $\Sigma^{0,I}_1$ predicate~$\Kcal^I$,
the following formula is again $\Sigma^{0,I}_1$:

\smallskip
{\itshape $\varphi(U) = $ ``there exists a finite set of candidate extensions
$d_0, \dots, d_m$ of~$c$ such that $\Kcal^I(U, d_i)$ is satisfied for each~$i \leq m$''}
\smallskip

We can therefore abstract the iteration forcing and ask the instance~$I$ to satisfy the following property:

\smallskip
{\itshape For every $\Sigma^{0,I}_1$ predicate~$\varphi(U)$,
if for every~$x \in \omega$, there exists a finite $\Qsf$-instance~$A > x$
such that~$\varphi(A)$ is satisfied, then one of the~$A$'s is a subinstance of~$I$.}
\smallskip

Let us illustrate how this simplification works by reproving the 
separation of the Erd\H{o}s-Moser theorem from the ascending
descending sequence principle.

\begin{definition}[Ascending descending sequence]
A linear order is \emph{stable} if it is of order type~$\omega + \omega^{*}$.
$\ads$ is the statement ``Every linear order admits an infinite ascending or descending sequence''.
$\sads$ is the restriction of $\ads$ to stable linear orders.
\end{definition}

The ascending descending sequence principle has been studied
within the framework of reverse mathematics by Hirschfeldt \& Shore~\cite{Hirschfeldt2007Combinatorial}.
Lerman, Solomon \& Towsner~\cite{Lerman2013Separating} constructed
an infinite stable linear order~$I$ with $\omega$ and~$\omega^{*}$ parts
respectively~$B_0$ and~$B_1$, such that for every condition~$c$
and every~$\Sigma^{0,I}_1$ predicate $\Kcal^I$,
if for every~$x \in \omega$, there exists a finite set~$A > x$ and a finite
set of candidate extensions~$d_0, \dots, d_m$ of~$c$ such that~$\Kcal^I(A, d_i)$ is satisfied for each~$i \leq m$,
then one of the~$A$'s will be included in~$B_0$ and another one will be included in~$B_1$.
In particular, taking~$\Kcal^I(A, c) = \Phi_e^{c \oplus I} \cap A \neq \emptyset$, no infinite
solution to the constructed tournament $I$-computes a solution to~$I$.
After abstraction, we obtain the following property:

\smallskip
{\itshape For every $\Sigma^{0,I}_1$ predicate~$\varphi(U)$,
if for every~$x \in \omega$, there exists a finite set~$A > x$
such that~$\varphi(A)$ is satisfied, one of the~$A$'s is included in~$B_0$ and one of the~$A$'s is included in~$B_1$.}
\smallskip

Following the terminology of~\cite{Lerman2013Separating},
we say that a formula~$\varphi(U)$ is \emph{essential}
if for every $x \in \omega$, there exists some finite set~$A > x$
such that~$\varphi(A)$ holds.
This fairness property coincides with the notion of hyperimmunity for~$\overline{B_0}$ and~$\overline{B_1}$.

\begin{definition}[Preservation of hyperimmunity]\ 
\begin{itemize}
	\item[1.]
Let~$D_0, D_1, \dots$ be a computable list of all finite sets and let~$f$ be computable.
A \emph{c.e.\ array}~$\{ D_{f(i)}\}_{i \geq 0}$ is a c.e.\ set of mutually disjoint finite sets~$D_{f(i)}$.
A set~$B$ is \emph{hyperimmune} if for every c.e.\ array~$\{ D_{f(i)}\}_{i \geq 0}$, $D_{f(i)} \cap B = \emptyset$ for some~$i$.

	\item[2.] A $\Pi^1_2$ statement~$\Psf$ \emph{admits preservation of hyperimmunity}
if for each set~$Z$, each countable collection of $Z$-hyperimmune sets~$A_0, A_1, \dots$, 
	and each $\Psf$-instance $X \leq_T Z$
	there exists a solution $Y$ to~$X$ such that the $A$'s are $Y \oplus Z$-hyperimmune.
\end{itemize}
\end{definition}

The following lemma establishes the link between the fairness property
for~$\sads$ and the notion of hyperimmunity.

\begin{lemma}\label{lem:hyperimmune-fairness}
Fix a set~$Z$. A set~$B$ is $Z$-hyperimmune if and only if for every essential $\Sigma^{0,Z}_1$ predicate~$\varphi(U)$,
$\varphi(A)$ holds for some finite set~$A \subseteq \overline{B}$.
\end{lemma}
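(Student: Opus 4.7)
The proof amounts to matching essential $\Sigma^{0,Z}_1$ predicates with $Z$-c.e.\ arrays of pairwise disjoint finite sets, and translating the conclusions of the two formulations into each other.

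For the forward direction, I would assume $B$ is $Z$-hyperimmune and fix an essential $\Sigma^{0,Z}_1$ predicate $\varphi(U)$. The plan is to build a $Z$-c.e.\ array of witnesses to $\varphi$ greedily: given $A_0, \ldots, A_{n-1}$ with maximum element $M_{n-1}$ (set $M_{-1} = -1$), perform a $Z$-computable search for a finite set $A_n > M_{n-1}$ satisfying $\varphi(A_n)$ and enumerate the first one found. Essentialness of $\varphi$ guarantees that the search always terminates, and the resulting family $\{A_n\}_{n \geq 0}$ is $Z$-c.e.\ and pairwise disjoint by the bound $A_n > M_{n-1}$. By $Z$-hyperimmunity of $B$, some $A_n$ is disjoint from $B$, i.e., $A_n \subseteq \overline{B}$, and by construction $\varphi(A_n)$ holds.

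For the converse, I would suppose the essentialness condition in the lemma and fix a $Z$-c.e.\ array $\{D_{f(i)}\}_{i \geq 0}$ of pairwise disjoint finite sets. We may assume infinitely many $D_{f(i)}$ are nonempty, since otherwise some $D_{f(i)} = \emptyset$ already gives $D_{f(i)} \cap B = \emptyset$. Define the $\Sigma^{0,Z}_1$ predicate $\varphi(U) \equiv (\exists i)\, D_{f(i)} = U$. This predicate is essential: given $x \in \omega$, pairwise disjointness forces at most $x + 1$ of the $D_{f(i)}$ to meet $\{0, \ldots, x\}$, so some nonempty $D_{f(i)}$ lies entirely above $x$. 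The hypothesis then yields a finite $A \subseteq \overline{B}$ with $\varphi(A)$, which is exactly some $D_{f(i)}$ disjoint from $B$.

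The main obstacle is conceptual rather than technical: one has to notice that disjointness of an infinite c.e.\ array automatically supplies the ``$A > x$'' condition required by essentialness, and symmetrically that a uniform $Z$-search for witnesses to an essential $\Sigma^{0,Z}_1$ predicate yields a genuine $Z$-c.e.\ array of pairwise disjoint witnesses. Once this matching is observed, both directions fall out with no additional combinatorial content.
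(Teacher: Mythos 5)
Your proof is correct and follows essentially the same route as the paper's: in the forward direction you extract a $Z$-c.e.\ array of pairwise disjoint witnesses to $\varphi$ via a greedy search that terminates by essentialness, and in the converse you encode a $Z$-c.e.\ array as the $\Sigma^{0,Z}_1$ predicate $\varphi(U) \equiv (\exists i)\,D_{f(i)} = U$; you merely add an explicit (and harmless) justification of essentialness in the converse direction, which the paper states without comment.
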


Hirschfeldt, Shore \& Slaman constructed in~\cite[Theorem 4.1]{Hirschfeldt2009atomic}
a stable computable linear order such that both the $\omega$ and the~$\omega^{*}$
part are hyperimmune. As every ascending (resp. descending) sequence
is an infinite subset of the $\omega$ (resp. $\omega^{*}$) part of the linear order,
we deduce the following theorem.

\begin{theorem}\label{thm:sads-hyperimmunity}
$\sads$ does not admit preservation of hyperimmunity.
\end{theorem}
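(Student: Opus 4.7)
The plan is to witness the failure of preservation of hyperimmunity by using exactly the stable computable linear order constructed by Hirschfeldt, Shore and Slaman as our $\sads$-instance, with the $\omega$-part and $\omega^{*}$-part playing the role of the hyperimmune sets that cannot be preserved.

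More concretely, I would take $Z = \emptyset$, let $L$ be the stable computable linear order from \cite[Theorem 4.1]{Hirschfeldt2009atomic}, and let $B_0, B_1$ be its $\omega$-part and $\omega^{*}$-part. By the cited theorem, both $B_0$ and $B_1$ are hyperimmune, hence $Z$-hyperimmune. Since $L$ is computable, $L \leq_T Z$, so $L$ is a legitimate $\sads$-instance relative to $Z$. A solution $Y$ to $L$ is an infinite ascending or descending sequence in $L$; by stability of $L$ together with the fact that any infinite ascending (respectively descending) sequence can contain only finitely many elements of $B_1$ (respectively $B_0$), the solution $Y$ is almost included in $B_0$ or in $B_1$.

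It then suffices to observe that if $Y$ is an infinite set almost included in $B_i$, then $B_i$ fails to be $Y$-hyperimmune, hence a fortiori fails to be $(Y \oplus Z)$-hyperimmune. Indeed, enumerate $Y$ in increasing order as $y_0 < y_1 < \dots$ computably in $Y$, discard the finitely many elements outside $B_i$, and form the c.e.\ array of singletons $\{\{y_i\}\}_{i \geq 0}$; every element meets $B_i$, contradicting hyperimmunity. Therefore no solution $Y$ to the $\sads$-instance $L$ can preserve both $B_0$ and $B_1$ as hyperimmune sets, which is exactly the negation of the preservation property.

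The proof is essentially immediate once the Hirschfeldt--Shore--Slaman construction is invoked, so there is no real obstacle here; all the combinatorial work has already been isolated in their result, and the only step to check carefully is the passage from ``$Y$ is an infinite ascending or descending sequence'' to ``$Y \subseteq^{*} B_0$ or $Y \subseteq^{*} B_1$'', which follows from stability of the linear order.
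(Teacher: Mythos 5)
Your proposal is correct and matches the paper's own argument: the paper invokes the Hirschfeldt--Shore--Slaman stable computable linear order with hyperimmune $\omega$- and $\omega^{*}$-parts and observes that any ascending (resp.\ descending) sequence is an infinite subset of the $\omega$-part (resp.\ $\omega^{*}$-part), so no $\sads$-solution can preserve both hyperimmunities. The only cosmetic difference is that you phrase the containment as ``almost included,'' whereas in fact an infinite ascending sequence can contain \emph{no} element of the $\omega^{*}$-part at all (any such element would have only finitely many $L$-successors, contradicting the infinitude of the ascent), so the inclusion is exact; either version suffices for the singleton-array argument you give.
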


A slight modification of the forcing in~\cite{Lerman2013Separating}
gives preservation of hyperimmunity of the Erd\H{o}s-Moser theorem.
We will however reprove it in a later section with a simpler forcing notion.
As expected, the notion of preservation of hyperimmunity
can be used to separate statements in reverse mathematics.

\begin{lemma}\label{lem:hyperimmunity-separation}
Fix two $\Pi^1_2$ statements~$\Psf$ and~$\Qsf$.
If $\Psf$ admits preservation of hyperimmunity and~$\Qsf$ does not,
then $\Psf$ does not imply~$\Qsf$ over~$\rca$.
\end{lemma}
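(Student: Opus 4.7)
The plan is to build a countable $\omega$-model (i.e.\ a Turing ideal) witnessing $\Psf \not\vdash \Qsf$ by a standard iterative construction, using preservation of hyperimmunity as the invariant that is maintained throughout.

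First, I would unpack the hypothesis that $\Qsf$ does not admit preservation of hyperimmunity: this gives a set $Z$, a countable collection of $Z$-hyperimmune sets $A_0, A_1, \dots$, and a $\Qsf$-instance $X \leq_T Z$ such that \emph{every} solution $Y$ to $X$ fails to make all the $A_i$'s $(Y \oplus Z)$-hyperimmune. These data are fixed once and for all; they will serve as the ``obstacle'' that the constructed model must respect.

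Next, I would build a chain of countable Turing ideals $\Ical_0 \subseteq \Ical_1 \subseteq \cdots$, starting with $\Ical_0 = \{W : W \leq_T Z\}$, together with a sequence of reals $Z = Z_0, Z_1, \dots$ such that $\Ical_s = \{W : W \leq_T Z_s\}$. A bookkeeping enumeration lists all pairs $\la s, e \ra$; at the stage handling $\la s, e \ra$, if $\Phi_e^{Z_s}$ codes a $\Psf$-instance $J$ in $\Ical_s$, then since $J \leq_T Z_s$ and (by induction) the $A_i$'s are $Z_s$-hyperimmune, the preservation-of-hyperimmunity hypothesis for $\Psf$ yields a solution $Y$ to $J$ such that the $A_i$'s are all $(Y \oplus Z_s)$-hyperimmune. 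Set $Z_{s+1} = Y \oplus Z_s$ and continue. Let $\Ical = \bigcup_s \Ical_s$. Then $\Ical$ is closed under Turing reduction and join (it is a union of principal ideals along a chain), so it is a Turing ideal and gives an $\omega$-structure $\Mcal \models \rca$.

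Finally, I would verify the two remaining properties. By construction, every $\Psf$-instance that belongs to some $\Ical_s$ is of the form $\Phi_e^{Z_s}$, so the bookkeeping eventually treats it and places a solution into $\Ical$; hence $\Mcal \models \Psf$. For $\Qsf$, note $X \in \Ical_0 \subseteq \Ical$, and the invariant ``the $A_i$'s are $Z_s$-hyperimmune'' propagates to every member of $\Ical$ via downward closure under Turing reduction. Thus if $\Ical$ contained a solution $Y$ to $X$, then $Y \oplus Z \in \Ical$, so every $A_i$ would be $(Y \oplus Z)$-hyperimmune, contradicting the choice of $X$, the $A_i$'s and $Z$. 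Therefore $\Mcal \not\models \Qsf$, which yields $\Psf \not\vdash \Qsf$ over $\rca$.

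The only delicate point is the bookkeeping ensuring that every $\Psf$-instance appearing in the limit ideal $\Ical$ is actually handled at some finite stage; this is routine because $\Ical$ is built as an increasing union of principal ideals, so each instance is $Z_s$-computable for some fixed $s$ and will be enumerated through some pair $\la s, e \ra$. The genuine mathematical content is concentrated in the single application of preservation of hyperimmunity at each stage, which makes the induction go through.
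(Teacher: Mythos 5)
Your proof follows essentially the same approach as the paper's: iteratively apply preservation of hyperimmunity for $\Psf$ along a chain of principal Turing ideals rooted at $Z$, maintaining hyperimmunity of the $A_i$'s as the invariant, and conclude that the limit ideal models $\rca + \Psf$ but contains no solution to the fixed $\Qsf$-instance. One small indexing slip to fix: at the stage $t$ handling the pair $\la s,e\ra$ you must apply preservation relative to the \emph{current} top $Z_t$ of the chain (not $Z_s$) and set $Z_{t+1}=Y\oplus Z_t$, since the $A_i$'s being $(Y\oplus Z_s)$-hyperimmune does not by itself guarantee they remain $(Y\oplus Z_t)$-hyperimmune.
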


Before starting an analysis of preservations of hyperimmunity 
for basic statements, we state another negative preservation result which enables
to reprove that the Erd\H{o}s-Moser theorem does not imply
the stable thin set theorem for pairs~\cite{Patey2013note}.

\begin{definition}[Thin set theorem]
Let $k \in \omega$ and $f : [\omega]^k \to \omega$.
A set $A$ is \textit{$f$-thin} if $f([A]^n) \neq \omega$, that is, if the set $A$ ``avoids''
at least one color.
$\ts^k$ is the statement ``every function $f : [\omega]^k \to \omega$ has
an infinite $f$-thin set''.
$\sts^2$ is the restriction of $\ts^2$ to stable functions.
\end{definition}

Introduced by Friedman in~\cite{FriedmFom:53:free}, the basic
reverse mathematics of the thin set theorem has been settled
by Cholak, Hirst \& Jockusch in~\cite{Cholak2001Free}.
Its study has been continued by Wang~\cite{Wang2014Some}, 
Rice~\cite{RiceThin}
and the author~\cite{PateyCombinatorial,Patey2015weakness}.
The author constructed in~\cite{Patey2014Somewhere}
an infinite computable stable function~$f : [\omega]^2 \to \omega$
such that the sets~$B_i = \{ n \in \omega : \lim_s f(n,s) \neq i\}$
are all hyperimmune. Every infinite $f$-thin set being an infinite
subset of one of the~$B$'s, we deduce the following theorem.

\begin{theorem}
$\sts^2$ does not admit preservation of hyperimmunity.
\end{theorem}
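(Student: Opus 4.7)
The plan is to use essentially the same template as for \theo{thm:sads-hyperimmunity}: exhibit a computable $\sts^2$-instance together with a family of hyperimmune sets that are destroyed by every solution. All the hard work has been outsourced to the author's earlier construction in~\cite{Patey2014Somewhere}, so the argument should reduce to a short verification.

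First I would take $Z = \emptyset$, the $\sts^2$-instance $f : [\omega]^2 \to \omega$ provided by~\cite{Patey2014Somewhere}, and the countable collection $A_i = B_i = \{n : \lim_s f(n,s) \neq i\}$ for $i \in \omega$. These $A_i$ are hyperimmune by choice of $f$, and $f \leq_T Z$ since $f$ is computable, so the hypotheses of preservation of hyperimmunity are met. It then suffices to show that \emph{no} infinite $f$-thin set $Y$ keeps some $B_i$ hyperimmune relative to $Y$.

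Next I would argue that any infinite $f$-thin set $Y$ satisfies $Y \subseteq^{*} B_c$ for some color $c$. Indeed, since $Y$ is $f$-thin, pick $c \notin f([Y]^2)$. For each $n \in Y$, stability of $f$ gives $\lim_s f(n,s) = L_n$; choosing any $s \in Y$ large enough that $f(n,s) = L_n$ and $s > n$, we have $f(n,s) \neq c$, hence $L_n \neq c$, i.e.\ $n \in B_c$. Thus every $n \in Y$ lies in $B_c$ (in fact literally, not just almost).

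From here the rest is routine: enumerating $Y$ as $y_0 < y_1 < \dots$ yields a $Y$-c.e.\ array of pairwise disjoint finite sets $D_i = \{y_i\}$, each of which meets $B_c$. Hence $B_c$ fails to be $Y$-hyperimmune, and since $Y \oplus Z \equiv_T Y$, the collection $\{A_i\}_{i\in\omega}$ is not $Y \oplus Z$-hyperimmune. This shows $\sts^2$ does not admit preservation of hyperimmunity. The only mildly delicate step is the use of stability to force $L_n \neq c$ for every $n \in Y$; everything else is bookkeeping.
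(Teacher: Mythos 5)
Your proposal is correct and is essentially the paper's own argument: cite the construction in~\cite{Patey2014Somewhere} of a computable stable $f$ with all $B_i = \{n : \lim_s f(n,s)\neq i\}$ hyperimmune, observe that any infinite $f$-thin set $Y$ is contained in $B_c$ for the avoided color $c$, and conclude that the $Y$-computable singleton array on $Y$ witnesses that $B_c$ is not $Y$-hyperimmune. You merely spell out the containment $Y\subseteq B_c$ in a bit more detail than the paper, which simply states that every infinite $f$-thin set is an infinite subset of one of the $B$'s.
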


\section{Basic preservations of hyperimmunity}\label{sect:basic-hyperimmunity}

When defining a notion, it is usually convenient to see how it relates
with typical sets. There are two kinds of typicalities: genericity and randomness.
Both notions admit preservation of hyperimmunity.

\begin{theorem}\label{thm:typicality-hyperimmunity}
Fix some set~$Z$ and a countable collection of $Z$-hyperimmune sets~$B_0, B_1, \dots$
\begin{itemize}
	\item[1.] If~$G$ is sufficiently Cohen generic relative to~$Z$, the~$B$'s are $G \oplus Z$-hyperimmune.
	\item[2.] If~$R$ is sufficiently random relative to~$Z$, the~$B$'s are $R \oplus Z$-hyperimmune.
\end{itemize}
\end{theorem}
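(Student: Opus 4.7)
The natural approach for both parts is to rely on the fairness characterization of hyperimmunity provided by Lemma~\ref{lem:hyperimmune-fairness}: it suffices to verify, for each~$i$ and each $\Sigma^{0, G \oplus Z}_1$ (resp.\ $\Sigma^{0, R \oplus Z}_1$) predicate~$\varphi$, that either~$\varphi$ is not essential or some witness $A \subseteq \overline{B_i}$ satisfies $\varphi(A)$. Fixing an index~$e$ for such a predicate, the plan is to treat $\varphi_e^{\cdot \oplus Z}(U)$ as a $\Sigma^{0,Z}_1$ relation in the oracle, accessed through the usual $\Sigma^0_1$ forcing relation asserting that a finite oracle~$\sigma$ already produces a witness for the existential in~$\varphi_e$.

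For part~(1), I would use Cohen forcing on~$\str$ and, for each pair~$(i,e)$, introduce the set~$D_{i,e}$ of conditions~$\sigma$ that either force $\varphi_e^{\cdot \oplus Z}(A)$ for some $A \subseteq \overline{B_i}$, or admit no extension~$\tau$ forcing $\varphi_e^{\cdot \oplus Z}(A)$ for any $A > |\sigma|$. The density of $D_{i,e}$ is the crucial point: given any~$\sigma_0$, the $\Sigma^{0,Z}_1$ predicate ``some $\sigma \succeq \sigma_0$ forces $\varphi_e^{\cdot \oplus Z}(U)$'' (in the variable~$U$) is either not essential, in which case suitably padding~$\sigma_0$ lands it in the second disjunct, or, by $Z$-hyperimmunity of~$B_i$, admits some witness $A \subseteq \overline{B_i}$, placing an extension of~$\sigma_0$ in the first disjunct. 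Any~$G$ meeting every~$D_{i,e}$ therefore satisfies the required condition for each~$e$, and Lemma~\ref{lem:hyperimmune-fairness} concludes.

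For part~(2), the plan is to show that the bad class
$$
\mathcal{N}_{i,e} = \bigl\{R : \varphi_e^{R \oplus Z} \mbox{ is essential, but no finite } A \subseteq \overline{B_i} \mbox{ satisfies } \varphi_e^{R \oplus Z}(A)\bigr\}
$$
is avoided by every sufficiently $Z$-random~$R$. The key implication is that $\mu(\mathcal{N}_{i,e} \cap \cyl{\sigma}) > \mu(\cyl{\sigma})/2$ already contradicts the $Z$-hyperimmunity of~$B_i$: setting $m_{k+1}$ larger than $\max E_k^\sigma$, enumerate $Z$-computably a finite collection $\mathcal{L}_k$ of finite sets $U > m_k$ until $\mu\bigl(\bigcup_{U \in \mathcal{L}_k} C_U \cap \cyl{\sigma}\bigr) > \mu(\cyl{\sigma})/2$, where $C_U = \{R : (\exists \tau \prec R) \; \tau \mbox{ forces } \varphi_e^{\cdot \oplus Z}(U)\}$ is an effectively $Z$-open class. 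Since $\mathcal{N}_{i,e} \cap \cyl{\sigma}$ and the captured union both exceed half the mass of~$\cyl{\sigma}$, they intersect, so some $R \in \mathcal{N}_{i,e}$ lies in some $C_U$ with $U \in \mathcal{L}_k$; the definition of $\mathcal{N}_{i,e}$ then forces $U \cap B_i \neq \emptyset$, hence also $E_k^\sigma := \bigcup_{U \in \mathcal{L}_k} U$ meets~$B_i$. The resulting $Z$-c.e.\ array $\{E_k^\sigma\}_k$ contradicts the $Z$-hyperimmunity of~$B_i$.

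The main obstacle is packaging this local contradiction into a uniform $Z$-effective null cover of~$\mathcal{N}_{i,e}$. Running the construction uniformly in~$\sigma$ shows that $\mathcal{N}_{i,e}$ has Lebesgue density at most~$1/2$ at every point; iterating the bound at finer scales assembles a $Z$-Martin-L\"of test covering~$\mathcal{N}_{i,e}$, so every~$R$ that is Martin-L\"of random relative to the appropriate oracle avoids~$\mathcal{N}_{i,e}$ for all~$i$ and~$e$.
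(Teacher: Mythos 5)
Your proof of item 1 is essentially the paper's proof verbatim: define $\psi(U) = (\exists \tau \succeq \sigma)\varphi(\tau,U)$, which is $\Sigma^{0,Z}_1$, and use $Z$-hyperimmunity of $B_i$ (via Lemma~\ref{lem:hyperimmune-fairness}) to split into the two cases that witness density of the good conditions. The padding of~$\sigma_0$ is a minor cosmetic addition; the content is identical.

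For item 2, your core step is also the paper's: assume some $\sigma$ has $\mu(\mathcal{N}_{i,e} \cap \cyl{\sigma}) > \mu(\cyl{\sigma})/2$, then derive a contradiction with $Z$-hyperimmunity of~$B_i$. The paper packages this by observing that $\psi(U) = [\mu(X : (\exists \tilde{A} \subseteq U)\varphi(X,\tilde{A})) > 2^{-|\sigma|-1}]$ is an essential $\Sigma^{0,Z}_1$ formula and invoking Lemma~\ref{lem:hyperimmune-fairness}; your $E^\sigma_k$ construction is exactly the unfolding of the proof of that lemma, so the two are the same argument at different abstraction levels. Where you overshoot is the final step you flag as ``the main obstacle'': packaging everything into a $Z$-Martin-L\"of test. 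This is not achievable, because $\mathcal{N}_{i,e}$ is defined using~$B_i$ (through the clause ``no finite $A \subseteq \overline{B_i}$ satisfies~$\varphi$''), so there is no $Z$-effective cover of it; a $Z \oplus B_i$-effective cover, maybe, but that is not what the theorem states and is not how you would want to apply it. The good news is that you do not need it: the uniform density bound $\mu(\mathcal{N}_{i,e} \cap \cyl{\sigma}) \le \mu(\cyl{\sigma})/2$ for all~$\sigma$ already forces $\mathcal{N}_{i,e}$ to be Lebesgue null (Lebesgue density theorem, or just the positive-measure$\Rightarrow$density-point contrapositive as in the paper), and ``sufficiently random relative to $Z$'' is meant informally as avoiding the countably many null classes $\mathcal{N}_{i,e}$, not as $Z$-ML-randomness. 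Dropping the ML-test claim and stopping at nullity makes your argument coincide with the paper's.
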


Note that this does not mean that the sets~$G$ and~$R$ are hyperimmune-free relative to~$Z$.
In fact, the converse holds: if $G$ is sufficiently generic and~$R$ sufficiently random, then both are $Z$-hyperimmune.
Some statements like the atomic model theorem ($\amt$), $\Pi^0_1$-genericity ($\pizog$)
and the rainbow Ramsey theorem for pairs~($\rrt^2_2$) are direct consequences
of genericity and randomness~\cite{Hirschfeldt2009atomic,Csima2009strength}. 
We can deduce from Theorem~\ref{thm:typicality-hyperimmunity} that they all
admit preservation of hyperimmunity.

Cohesiveness is a very useful statement in the analysis of Ramsey-type theorems
as it enables to transform an arbitrary instance into a stable one~\cite{Cholak2001strength}.
A set~$C$ is \emph{cohesive} for a sequence of sets~$R_0, R_1, \dots$
if~$C \subseteq^{*} R_i$ or~$C \subseteq^{*} \overline{R_i}$ for each~$i$.

\begin{theorem}\label{thm:coh-fairness-preservation}
$\coh$ admits preservation of hyperimmunity.
\end{theorem}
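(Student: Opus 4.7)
The plan is to use Mathias-style forcing with $Z$-computable reservoirs shrunk along the sequence $R_0, R_1, \dots$, and then to show via a simple dichotomy that a sufficiently generic filter simultaneously produces a cohesive set and preserves the $Z$-hyperimmunity of every $B_i$.

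First I would build the reservoir sequence: set $X_0 = \omega$, and for each $k$ let $X_{k+1}$ be whichever of $X_k \cap R_k$ and $X_k \setminus R_k$ is infinite. This is a $Z$-uniform decreasing sequence, and any infinite set almost contained in $X_k$ is cohesive for $R_0, \dots, R_{k-1}$. I then take conditions to be pairs $(F, X)$ where $F$ is finite, $X$ is an infinite $Z$-computable subset of some $X_k$, and $\max F < \min X$; extensions are the usual Mathias extensions $(F', X') \leq (F, X)$ with $F \subseteq F' \subseteq F \cup X$ and $X' \subseteq X$. A sufficiently generic filter yields a set $C$ which, by density of the conditions whose reservoir lies in $X_k$, is almost contained in every $X_k$, hence cohesive for the whole sequence.

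Next I would discharge, for each pair $(i, e)$, the requirement $\Rcal_{i,e}$: if $W^{C \oplus Z}_e$ enumerates an infinite c.e.\ array of pairwise disjoint finite sets, then some enumerated set is disjoint from $B_i$. Given a condition $(F, X)$, consider the $\Sigma^{0,Z}_1$ predicate
\[
\varphi(U) \equiv (\exists F' \subseteqfin X)(\exists s)\ W^{(F \cup F') \oplus Z}_{e,s}\ \mbox{enumerates } U \mbox{ as a member of the array}.
\]
If $\varphi$ is essential, Lemma~\ref{lem:hyperimmune-fairness} applied to the $Z$-hyperimmunity of $B_i$ gives some $A \subseteq \overline{B_i}$ with $\varphi(A)$; the witnessing $F'$ furnishes the extension $(F \cup F',\, X \cap (\max F', \infty))$, which forces $A$ to appear in the generic array and thereby fulfils $\Rcal_{i,e}$. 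If $\varphi$ is not essential, some $n$ bounds the minimum element of every $U$ with $\varphi(U)$; since every element entering $W^{C \oplus Z}_e$ along the generic must be witnessed by a finite extension of $F$ inside $X$, every set in the generic array contains an element $\leq n$, and by pairwise disjointness the array has at most $n+1$ members, so $\Rcal_{i,e}$ is satisfied vacuously.

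The main obstacle is precisely the not-essential case, which must transfer from the current condition to the final generic. This is settled by the structural observation that Mathias extensions confine the generic to $F \cup X$, so no later extension can enumerate an array set with minimum above $n$; the non-essentialness of $\varphi$ at the current condition therefore propagates all the way to $C$. Interleaving the density requirements for cohesiveness with the dichotomy for each $(i, e)$ yields a generic $C$ that is cohesive for $R_0, R_1, \dots$ while preserving the $Z$-hyperimmunity of every $B_i$, proving the theorem.
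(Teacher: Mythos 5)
Your proposal takes essentially the same route as the paper: Mathias forcing for cohesiveness, with the hyperimmunity step discharged by applying $Z$-hyperimmunity of $B_i$ to a $\Sigma^{0,Z}_1$ predicate obtained by existentially quantifying over finite extensions of the stem inside the reservoir, which is precisely the role of $\psi(U) = (\exists G \supseteq F)[G \subseteq F\cup X \wedge \varphi(G,U)]$ in the paper's Lemma~\ref{lem:coh-preservation-lemma}. The only cosmetic differences are that you work directly with c.e.\ arrays $W^{C\oplus Z}_e$ rather than with general $\Sigma^{0,Z}_1$ formulas $\varphi(G,U)$, and that you fix the reservoir tower $X_0 \supseteq X_1 \supseteq \cdots$ in advance rather than splitting lazily; note only that this tower is not $Z$-\emph{uniformly} computable (choosing the infinite side is undecidable), though each individual $X_k$ is $Z$-computable, which is all the argument needs.
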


The proof is done by the usual construction of a cohesive set
with Mathias forcing, combined with the following lemma.

\begin{lemma}\label{lem:coh-preservation-lemma}
For every set~$Z$, every $Z$-computable Mathias condition~$(F, X)$, 
every~$\Sigma^{0,Z}_1$ formula~$\varphi(G, U)$ and every $Z$-hyperimmune set~$B$,
there exists an extension~$(E, Y)$ such that $X =^{*} Y$
and either $\varphi(G, U)$ is not essential for every set~$G$ satisfying~$(E,Y)$,
or $\varphi(E, A)$ holds for some finite set~$A \subseteq \overline{B}$.
\end{lemma}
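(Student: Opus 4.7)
The plan is to distill the essentiality of $\varphi(G, \cdot)$, as $G$ ranges over sets extending $(F,X)$, into a single $\Sigma^{0,Z}_1$ predicate on $U$ to which the $Z$-hyperimmunity of $B$ (via Lemma~\ref{lem:hyperimmune-fairness}) can be applied. Specifically, define $\psi(U)$ to hold iff there exists a finite set $E$ with $F \subseteq E$ and $E \setminus F \subseteq X$ such that $\varphi(E, U)$ holds, reading $E$ through its characteristic function (which is eventually $0$). Since $F$ and $X$ are $Z$-computable, $\psi$ is $\Sigma^{0,Z}_1$, and the two alternatives of the conclusion will fall out from whether $\psi$ is essential.

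If $\psi$ is essential, Lemma~\ref{lem:hyperimmune-fairness} produces a finite $A \subseteq \overline{B}$ with $\psi(A)$, which unwinds to a finite $E$ with $F \subseteq E$, $E \setminus F \subseteq X$, and $\varphi(E,A)$. Setting $Y = X \setminus [0,\max E]$ makes $(E,Y)$ a Mathias extension of $(F,X)$ with $X =^{*} Y$, and lands us in the second alternative of the conclusion.

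Otherwise $\psi$ is not essential, so fix $x$ such that $\psi(U)$ fails for every finite $U > x$, and take $(E,Y) = (F,X)$. For any $G$ satisfying this condition we have $F \subseteq G \subseteq F \cup X$. If $\varphi(G,\cdot)$ were essential, some finite $U > x$ would satisfy $\varphi(G, U)$, and by the standard use principle for $\Sigma^{0,Z}_1$ formulas this would be witnessed by some bounded initial segment of the characteristic function of $G$; picking $n$ larger than both $\max F$ and that use, and letting $E' = G \cap [0,n]$, one has $F \subseteq E' \subseteq F \cup X$ and $\varphi(E', U)$ holds — since $E'$ and $G$ share characteristic functions on $[0,n]$, the same bounded witness applies. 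This forces $\psi(U)$ with $U > x$, contradicting the choice of $x$; so $\varphi(G,\cdot)$ is not essential, which is the first alternative. The main subtlety is exactly this transport step: one must ensure that both the positive and negative bits of $G$ used by the $\Sigma^0_1$ witness are correctly reflected in the finite set $E'$, which is why $\psi$ is formulated via $\varphi(E, \cdot)$ directly on finite sets rather than via some more elaborate forcing expression over strings.
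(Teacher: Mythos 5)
Your proof is correct and takes essentially the same approach as the paper: you define the same auxiliary formula $\psi(U)$ existentially quantifying over finite extensions of $F$ contained in $F\cup X$ and then split on whether $\psi$ is essential, invoking $Z$-hyperimmunity of $B$ in the essential case. The only difference is presentational: the paper's $\psi$ quantifies over (possibly infinite) $G$ with $F\subseteq G\subseteq F\cup X$ and applies the use principle when $\psi$ is essential to extract a finite $E$, whereas you build finiteness into $\psi$ up front and instead apply the use principle in the non-essential case to show $\varphi(G,U)\Rightarrow\psi(U)$ for $G$ satisfying the condition; the two are interchangeable.
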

\begin{proof}
Define
\[
\psi(U) = (\exists G \supseteq F)[G \subseteq F \cup X \wedge \varphi(G, U)]
\]
The formula~$\psi(U)$ is~$\Sigma^{0,Z}_1$. By hyperimmunity of~$B$,
either $\psi(U)$ is not essential, or~$\psi(A)$ holds for some finite set~$A \subseteq \overline{B}$.
In the first case, the condition~$(F,X)$ already satisfies the desired property.
In the second case, let~$A \subseteqfin \overline{B}$ be such that~$\psi(A)$ holds.
By the use property, there exists a finite set~$E$ satisfying~$(F, X)$ such that~$\varphi(E, A)$ holds.
Let $Y = X \setminus [0, max(E)]$. The condition $(E, Y)$ is a valid extension.\qed
\end{proof}

Weak König's lemma ($\wkl$) states that every infinite, binary tree
admits an infinite path. 

\begin{theorem}\label{thm:wkl-hyperimmunity}
$\wkl$ admits preservation of hyperimmunity.
\end{theorem}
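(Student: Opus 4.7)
The plan is to prove, for the Jockusch--Soare $\wkl$ forcing whose conditions are non-empty $\Pi^{0,Z}_1$ classes $\Ccal \subseteq 2^\omega$, an analogue of Lemma~\ref{lem:coh-preservation-lemma}, and then to deduce Theorem~\ref{thm:wkl-hyperimmunity} by iterating along a generic filter together with Lemma~\ref{lem:hyperimmunity-separation}. Concretely, I will show: given a non-empty $\Pi^{0,Z}_1$ class $\Ccal = [T]$ with $T$ a $Z$-computable infinite binary tree, a $\Sigma^{0,Z}_1$ formula $\varphi(G, U)$, and a $Z$-hyperimmune set $B$, there is a non-empty $\Pi^{0,Z}_1$ subclass $\Ccal' \subseteq \Ccal$ such that either $\varphi(G, U)$ is not essential for every path $G \in \Ccal'$, or there is a finite set $A \subseteq \overline{B}$ with $\varphi(G, A)$ for every $G \in \Ccal'$.

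The central gadget will be the $\Sigma^{0,Z}_1$ formula
\[
\Psi(U) := (\exists n)(\forall \sigma \in T \cap 2^n)(\exists A \subseteq U)[\varphi(\sigma, A)],
\]
which, by compactness of $[T]$ together with $\Sigma^{0}_1$-monotonicity in $G$, is equivalent to ``for every $G \in \Ccal$, some $A \subseteq U$ satisfies $\varphi(G, A)$''. I will then apply Lemma~\ref{lem:hyperimmune-fairness} to $\Psi$ and $B$ to get a dichotomy. If $\Psi$ is essential, hyperimmunity of $B$ yields $\Psi(U)$ for some $U \subseteq \overline{B}$, so $\Ccal$ is covered by the finitely many open sets $\{G \in \Ccal : \varphi(G, A)\}$ for $A \subseteq U$; picking a basic cylinder $\Ccal \cap \cyl{\sigma}$ inside some non-empty such piece delivers the sought $\Ccal'$ together with its witness $A \subseteq \overline{B}$. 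If $\Psi$ is not essential, fix $x_0$ with $\neg\Psi(U)$ for every finite $U > x_0$ and set $\Ccal' := \{G \in \Ccal : (\forall A > x_0)\, \neg\varphi(G, A)\}$, which is visibly $\Pi^{0,Z}_1$ and on which every path $G$ has $\varphi(G, U)$ non-essential, witnessed uniformly by $x_0$.

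The main difficulty will be showing $\Ccal' \neq \emptyset$ in the non-essential case, which I plan to handle by the finite intersection property using compactness of $\Ccal$. Given any finite collection $A_1, \ldots, A_k > x_0$, I would take $U := A_1 \cup \cdots \cup A_k > x_0$: then $\neg\Psi(U)$ provides a single $G \in \Ccal$ with $\neg\varphi(G, A)$ for \emph{every} subset $A \subseteq U$, in particular for each $A_i$, so that every finite subintersection of the $\Pi^{0,Z}_1$ classes $\{G \in \Ccal : \neg\varphi(G, A_i)\}$ is non-empty. The naive formula $(\forall G \in \Ccal)\,\varphi(G, U)$ would only supply separate witness paths for each $A_i$ with no common intersection point; the internal ``$\exists A \subseteq U$'' in $\Psi$ repackages this multi-dimensional requirement into a single $\Sigma^{0,Z}_1$ predicate on $U$, so that one application of the hyperimmunity characterization controls all finite subfamilies simultaneously.
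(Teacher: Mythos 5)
Your proposal is correct and follows essentially the same route as the paper: the same auxiliary $\Sigma^{0,Z}_1$ formula $\Psi(U) = (\exists n)(\forall\sigma \in T\cap 2^n)(\exists A\subseteq U)\varphi(\sigma,A)$, the same essential/non-essential dichotomy, compactness plus hyperimmunity giving a cylinder and a witness $A\subseteq\overline{B}$ in the essential case, and an infinite subtree/closed subclass via a finite-intersection (König) argument in the non-essential case. The only difference is presentational: you force with $\Pi^{0,Z}_1$ classes directly, while the paper uses conditions $(\sigma, R)$ where $\sigma$ is a stem of an infinite $Z$-computable subtree $R\subseteq T$, which are interchangeable formulations of the same forcing.
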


Wei Wang~[personal communication] observed that $\wkl$ preserves hyperimmunity
in a much stronger sense than~$\coh$, since cohesive sets are of hyperimmune degree~\cite{Jockusch1993cohesive},
whereas by the hyperimmune-free basis theorem~\cite{Jockusch197201}, $\wkl$ can preserve
hyperimmunities of \emph{every} hyperimmune set simultaneously and not only countably many.

\section{The Erd\H{o}s-Moser theorem and preservation of hyperimmunity}\label{sect:em-hyperimmunity}

The Erd\H{o}s-Moser theorem is a statement from graph theory 
which received a particular interest from reverse mathematical community
as it provides, together with the ascending descending sequence principle,
an alternative proof of Ramsey's theorem for pairs.

\begin{definition}[Erd\H{o}s-Moser theorem] 
A tournament $T$ is an irreflexive binary relation such that for all $x,y \in \omega$ with $x \not= y$, exactly one of $T(x,y)$ or $T(y,x)$ holds. A tournament $T$ is \emph{transitive} if the corresponding relation~$T$ is transitive in the usual sense. 
$\emo$ is the statement ``Every infinite tournament $T$ has an infinite transitive subtournament.''
\end{definition}

The Erd\H{o}s-Moser theorem was introduced in reverse mathematics by Bovykin \& Weiermann~\cite{Bovykin2005strength}
and then studied by Lerman, Solomon \& Towsner~\cite{Lerman2013Separating} and the author~\cite{Patey2015Degrees,PateyCombinatorial,Patey2014Controlling}.
In this section, we give a simple proof of the following theorem.

\begin{theorem}\label{thm:em-hyperimmunity}
$\emo$ admits preservation of hyperimmunity.
\end{theorem}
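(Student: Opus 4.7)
The plan is to adapt the proof of Lemma~\ref{lem:coh-preservation-lemma} from $\coh$ to $\emo$, replacing Mathias forcing by its standard Erd\H{o}s-Moser variant. A condition is a pair~$(F, X)$ where $F$ is a finite $T$-transitive subtournament of the given $Z$-computable infinite tournament~$T$, $X$ is an infinite $Z$-computable reservoir with $\min X > \max F$, and $F \cup \{x\}$ is $T$-transitive for every~$x \in X$. A sufficiently generic filter yields an infinite $T$-transitive set~$G$, which will be our solution to the $\emo$-instance~$T$.

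The heart of the argument is the following analogue of Lemma~\ref{lem:coh-preservation-lemma}: for every such condition~$(F, X)$, every $\Sigma^{0,Z}_1$ formula~$\varphi(G, U)$, and every $Z$-hyperimmune set~$B$, there is an extension~$(E, Y)$ such that either $\varphi(G, U)$ is not essential for any $T$-transitive set~$G$ satisfying~$(E, Y)$, or $\varphi(E, A)$ holds for some finite~$A \subseteq \overline{B}$. To prove this, let $\psi(U)$ be the $\Sigma^{0,Z}_1$ formula asserting that there is a finite $T$-transitive set~$E$ with $F \subseteq E \subseteq F \cup X$ and $\varphi(E, U)$. By hyperimmunity of~$B$, either $\psi$ is not essential, or $\psi(A)$ holds for some finite $A \subseteq \overline{B}$. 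In the first case, since every finite initial segment of a $T$-transitive set is itself $T$-transitive, the use property shows that $(F, X)$ already satisfies the first alternative. In the second case, extract a witness~$E$, and let $Y$ be the set of those $y \in X$ with $y > \max E$ such that $E \cup \{y\}$ is $T$-transitive.

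The principal obstacle is to guarantee that this $Y$ is infinite, as an arbitrary finite $T$-transitive extension $E \subseteq F \cup X$ need not admit infinitely many one-point transitive extensions from~$X$. This is the recurrent technical difficulty in $\emo$ forcing and will be handled by strengthening the notion of condition: a Ramsey-type thinning of~$X$ according to the $T$-edge pattern that each $x \in X$ exhibits toward the elements of~$F$ yields an infinite $Z$-computable subreservoir $X' \subseteq X$ in which every finite $T$-transitive $E$ with $F \subseteq E \subseteq F \cup X'$ has infinitely many compatible one-point extensions within~$X'$, and this property is preserved under further thinning. With this enrichment in place, the lemma holds as stated. A standard density argument, meeting for each Turing index~$e$ and each set~$B_i$ the requirement derived from the $\Sigma^{0,Z}_1$ formula expressing that $\Phi_e^{G \oplus Z}$ enumerates a c.e.\ array, then produces a sufficiently generic $G$ along which every~$B_i$ remains hyperimmune relative to $G \oplus Z$.
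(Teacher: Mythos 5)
Your proposal has a genuine gap, and it is exactly at the point you flag as the ``principal obstacle.'' The claimed Ramsey-type thinning does not exist. Thinning $X$ by the $T$-edge pattern that each $x \in X$ exhibits toward $F$ only ensures that all of $X'$ sits in a single minimal $T$-interval of $F$ (this is just clause (b) of an EM condition); it imposes no control on the tournament structure of $T$ restricted to $X'$. A concrete counterexample: suppose all of $X' = \{e_1, e_2, e_3, \dots\}$ has the same edge pattern toward $F$, with $T(e_1, e_2)$, $T(e_j, e_1)$ and $T(e_2, e_j)$ for all $j > 2$. Then $E = F \cup \{e_1, e_2\}$ is $T$-transitive, yet for every $j > 2$ the triple $\{e_1, e_2, e_j\}$ is a cycle, so $E$ has no one-point $T$-transitive extension inside $X'$. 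Thus the property you assert of $X'$ fails, and no further ``thinning'' fixes it, because each new element adjoined to $F$ re-splits the reservoir in a way that cannot in general be predicted computably. This is precisely why Erd\H{o}s--Moser forcing is not a cosmetic variant of Mathias forcing, and why the literature introduces the minimal-interval requirement as part of the condition rather than recovering it post hoc.

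The paper sidesteps this problem by choosing a different $\psi$ and by invoking $\wkl$. Rather than asking directly ``is there a finite transitive $E \subseteq F \cup X$ with $\varphi(E,U)$,'' the paper's $\psi(U)$ asserts that \emph{for every $2$-partition $X_0 \cup X_1 = X$} there is a side $j$, a $T$-transitive $G \subseteq X_j$, and a subset $\tilde A \subseteq U$ with $\varphi(F \cup G, \tilde A)$; by compactness this is still $\Sigma^{0,X\oplus Z}_1$. When $\psi(A)$ holds for some $A \subseteq \overline{B_i}$, compactness produces a finite $H \subset X$ over which the universal holds; each $y \in X$ induces a partition of $H$ according to its $T$-edges, and the pigeonhole principle gives an infinite $Y \subseteq X$ inducing a fixed partition $H_0 \cup H_1$. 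Taking $G \subseteq H_j$ witnessing $\psi(A)$ for that partition, $(F \cup G, Y)$ is a valid EM extension --- so the witnessing transitive set and an infinite reservoir are extracted \emph{simultaneously}, which is the step your $\psi$ cannot deliver. When $\psi$ is not essential, the paper does not conclude trivially: it observes that the class of ``bad'' partitions $X_0 \oplus X_1$ is a nonempty $\Pi^{0,X\oplus Z}_1$ class, and invokes Theorem~\ref{thm:wkl-hyperimmunity} (preservation of hyperimmunity for $\wkl$) to select one that keeps the $B$'s hyperimmune, then restricts the reservoir to its infinite side. Your proposal makes no use of $\wkl$ at all, which is another sign that it cannot be completed along the sketched lines; the paper itself emphasizes that this modularity --- calling $\wkl$-preservation as a black box --- is the point of the theorem.
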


The proof of Theorem~\ref{thm:em-hyperimmunity} exploits the modularity of the framework by using
preservation of hyperimmunity of~$\wkl$. Together with previous preservations results, this theorem
is sufficient to reprove Theorem~\ref{thm:wang-theorem}.
We must first introduce some terminology.

\begin{definition}[Minimal interval]
Let $T$ be an infinite tournament and $a, b \in T$
be such that $T(a,b)$ holds. The \emph{interval} $(a,b)$ is the
set of all $x \in T$ such that $T(a,x)$ and $T(x,b)$ hold.
Let $F \subseteq T$ be a finite transitive subtournament of $T$.
For $a, b \in F$ such that $T(a,b)$ holds, we say that $(a,b)$
is a \emph{minimal interval of $F$} if there is no $c \in F \cap (a,b)$,
i.e. no $c \in F$ such that $T(a,c)$ and $T(c,b)$ both hold.
\end{definition}

\begin{definition}
An \emph{Erd\H{o}s Moser condition} (EM condition) for an infinite tournament~$T$
is a Mathias condition $(F, X)$ where
\begin{itemize}
	\item[(a)] $F \cup \{x\}$ is $T$-transitive for each $x \in X$
	\item[(b)] $X$ is included in a minimal $T$-interval of $F$.
\end{itemize}
\end{definition}

EM extension is Mathias extension. A set~$G$ \emph{satisfies} an EM condition~$(F, X)$
if it is $T$-transitive and satisfies the Mathias extension $(F, X)$.
Basic properties of EM conditions have been stated and proven in~\cite{Patey2015Degrees}.

Fix a set~$Z$ and some countable collection of $Z$-hyperimmune sets~$B_0, B_1, \dots$
Our forcing notion is the partial order of Erd\H{o}s Moser conditions~$(F,X)$
such that the~$B$'s are $X \oplus Z$-hyperimmune. Our initial condition is~$(\emptyset, \omega)$.
By Lemma~5.9 in~\cite{Patey2015Degrees}, EM conditions are extendable, 
so we can force the transitive subtournament to be infinite.
Therefore it suffices to prove the following lemma to deduce Theorem~\ref{thm:em-hyperimmunity}.

\begin{lemma}
Fix a condition~$(F,X)$, some $i \in \omega$ and some~$\Sigma^{0,Z}_1$
formula~$\varphi(G, U)$.
There exists an extension~$(E,Y)$ such that either $\varphi(G, U)$ is not essential
for every set~$G$ satisfying~$(E,Y)$,
or $\varphi(E, A)$ holds for some finite set~$A \subseteq \overline{B_i}$.
\end{lemma}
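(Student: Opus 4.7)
The plan mirrors Lemma~\ref{lem:coh-preservation-lemma}. Define the $\Sigma^{0,X\oplus Z}_1$ formula $\psi(U)$ to assert the existence of some finite $E$ reachable from $(F,X)$ by a chain of EM extensions (each step adding a single element and preserving the EM condition) such that $\varphi(E,U)$ holds. Each step is $\Sigma^0_1$-checkable in $X\oplus Z$, and by extendability (Lemma~5.9 of~\cite{Patey2015Degrees}) the terminal pair $(E, X_E)$ of any such chain is itself a valid EM condition extending $(F,X)$ with $X_E \subseteq X$ infinite.

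If $\psi$ is not essential, then the first disjunct holds for the original condition $(F,X)$: any infinite $T$-transitive $G$ satisfying $(F,X)$ arises as the union of a chain of EM extensions (enumerate $G \setminus F$ in $T$-transitive order), so essentiality of $\varphi(G, \cdot)$ would force essentiality of $\psi$, a contradiction.

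If $\psi$ is essential, then by Lemma~\ref{lem:hyperimmune-fairness} applied to the $X\oplus Z$-hyperimmune set $B_i$, there is a finite $A \subseteq \overline{B_i}$ with $\psi(A)$; extract the corresponding chain, ending in a valid EM condition $(E, X_E)$ with $\varphi(E, A)$.

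The main obstacle is then to exhibit an infinite $Y \subseteq X_E$ lying in a single minimal $T$-interval of $E$ such that $Y \oplus Z$ preserves hyperimmunity of every $B_n$. The minimal intervals of $E$ induce a finite $X \oplus Z$-computable partition $X_E = Y_1 \sqcup \cdots \sqcup Y_k$; a direct pigeonhole shows that each individual $B_n$ is preserved by some $Y_{j(n)}$, but $j(n)$ may depend on $n$, so no single part is obviously good for all of them simultaneously. We resolve this by invoking $\coh$-preservation of hyperimmunity (Theorem~\ref{thm:coh-fairness-preservation}) on the $X \oplus Z$-computable sequence $Y_1, \dots, Y_k, X_E, X_E, \dots$: this yields an infinite cohesive set $C$ with $C \subseteq^{*} X_E$, $C \subseteq^{*} Y_j$ for a unique $j$, and the $B_n$'s all $C \oplus X \oplus Z$-hyperimmune. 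Setting $Y = C \cap Y_j$ produces an infinite subset of a single minimal interval of $E$; $(E,Y)$ is then a valid EM extension of $(F,X)$, and since $Y \leq_T C \oplus X \oplus Z$, the $B_n$'s remain $Y \oplus Z$-hyperimmune, as required.
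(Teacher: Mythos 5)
Your proposal has a genuine gap that cannot be patched by the moves you describe, and it is worth being precise about where it breaks.

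Your formula $\psi(U)$ asserts the existence of a finite chain of ``EM extensions'' from $(F,X)$ ending at some $E$ with $\varphi(E,U)$. For $\psi$ to be $\Sigma^{0,X\oplus Z}_1$, the chain cannot require the intermediate reservoirs to be infinite (that is $\Pi^0_2$). But once you drop that requirement, the final pair need not admit \emph{any} infinite reservoir: there are tournaments $T$ and transitive $E$ with $F \subseteq E \subseteq F\cup X$ for which $\{x \in X : E\cup\{x\}\text{ is }T\text{-transitive}\}$ is finite, even though $E$ is reachable by a chain with shrinking (eventually empty) reservoirs. So in your Case~1, the $E$ extracted from an essential $\psi$ may simply have no EM reservoir at all, and invoking ``extendability'' or applying $\coh$ to the minimal intervals of $E$ cannot manufacture one. (Separately, the $\coh$ move as stated does not work: a $\vec R$-cohesive set may be almost disjoint from $X$, so it will not in general land inside $X_E$; and when there are only finitely many minimal intervals, $\coh$ is anyway a non-tool — the obstruction is not selecting an interval but guaranteeing one is infinite.)

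The paper's $\psi(U)$ is engineered precisely to avoid this. It universally quantifies over $2$-partitions $X_0 \cup X_1 = X$ and asks for a transitive $G \subseteq X_j$ with $\varphi(F\cup G, \tilde A)$; compactness keeps this $\Sigma^{0,X\oplus Z}_1$. That universal quantifier is the point: in Case~1 it yields, for a single finite $H$, a witnessing $G$ for \emph{every} partition of $H$, so one can first pigeonhole over the partitions of $H$ induced by elements of $X$ to pin down an infinite $Y \subseteq X$ and a partition $(H_0,H_1)$ with $H_0 \to_T Y \to_T H_1$, and only then pick a $G \subseteq H_j$ good for that very partition. This is what guarantees $(F\cup G, Y)$ is a legitimate EM condition. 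Dually, in Case~2 the negation of essentiality makes the relevant $\Pi^{0,X\oplus Z}_1$ class of partitions nonempty, and preservation of hyperimmunity by $\wkl$ supplies a partition $X_0\oplus X_1$ along which to thin the reservoir while keeping every $B_n$ hyperimmune. Your argument never appeals to $\wkl$, which is a symptom of the missing universal quantifier: with a purely existential $\psi$ your Case~2 becomes trivial but Case~1 becomes unprovable. Fixing this requires adopting the partition-based $\psi$ (or an equivalent device) rather than a chain-based one; the problem is not a detail but the absence of the combinatorial core of the paper's proof.
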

\begin{proof}
Let~$\psi(U)$ be the formula
``For every partition~$X_0 \cup X_1 = X$, there exists some~$j < 2$,
a $T$-transitive set~$G \subseteq X_j$ and a set~$\tilde{A} \subseteq U$
such that~$\varphi(F \cup G, \tilde{A})$ holds.''
By compactness, $\psi(U)$ is a~$\Sigma^{0,X \oplus Z}_1$ formula. By $X \oplus Z$-hyperimmunity of~$B_i$,
we have two cases:
\begin{itemize}
	\item Case 1: $\psi(A)$ holds for some finite set~$A \subseteq \overline{B_i}$.
	By compactness, there exists a finite set~$H \subset X$ such that for every partition~$H_0 \cup H_1 = H$,
	there exists some~$j < 2$, a $T$-transitive set~$G \subseteq H_j$ and a set~$\tilde{A} \subseteq A$
	such that~$\varphi(F \cup G, \tilde{A})$ holds. Given two sets~$U$ and~$V$, 
	we denote by~$U \to_T V$ the formula~$(\forall x \in U)(\forall y \in V)T(x,y)$.
	Each element~$y \in X$ induces a partition~$H_0 \cup H_1 = H$ such that~$H_0 \to_T \{y\} \to_T H_1$.
	There exists finitely many such partitions, so by the infinite pigeonhole principle,
	there exists an~$X$-computable infinite set~$Y \subset X$ and a partition~$H_0 \cup H_1 = H$
	such that~$H_0 \to_T Y \to_T H_1$. Let~$j < 2$ and~$G \subseteq H_j$ be the $T$-transitive set
	such that $\varphi(F \cup G, \tilde{A})$ holds for some~$\tilde{A} \subseteq A \subseteq \overline{B_i}$.
	By Lemma~5.9 in~\cite{Patey2015Degrees}, $(F \cup G, Y)$ is a valid extension. 

	\item Case 2: $\psi(U)$ is not essential with some witness~$x$.
	Then the~$\Pi^{0,X \oplus Z}_1$ class~$\Ccal$ of sets $X_0 \oplus X_1$ such that~$X_0 \cup X_1 = X$ and
	for every~$j < 2$, every $T$-transitive set~$G \subseteq X_j$ and every finite set~$\tilde{A} > x$,
	the formula $\varphi(F \cup G, \tilde{A})$ does not hold is not empty.
	By preservation of hyperimmunity of~$\wkl$, there exists some partition~$X_0 \oplus X_1 \in \Ccal$
	such that the~$B$'s are~$X_0 \oplus X_1 \oplus Z$-hyperimmune. The set~$X_j$
	is infinite for some~$j < 2$ and the condition~$(F, X_i)$ is the desired EM extension.
	\qed
\end{itemize}
\end{proof}

\section{Thin set theorem and preservation of hyperimmunity}\label{sect:ts-hyperimmunity}

There exists a fundamental difference in the way
$\sads$ and~$\sts^2$ witness their failure of preservation of hyperimmunity.
In the case of~$\sads$, we construct two hyperimmune sets 
whereas in the case of~$\sts^2$, a countable collection of hyperimmune sets is used.
This difference can be exploited to obtain further separation results.

\begin{definition}[Preservation of $n$ hyperimmunities]
A $\Pi^1_2$ statement~$\Psf$ \emph{admits preservation of $n$ hyperimmunities} 
if for each set~$Z$, each $Z$-hyperimmune sets~$A_0, \dots, \allowbreak A_{n-1}$,
and each $\Psf$-instance $X \leq_T Z$
there exists a solution $Y$ to~$X$ such that the~$A$'s are $Y \oplus Z$-hyperimmune.
\end{definition}

Theorem~\ref{thm:sads-hyperimmunity} shows that~$\sads$ does not admit preservation of 2 hyperimmunities.
On the other side, we shall see that~$\sts^2$ admits preservation of~$n$ hyperimmunities for every~$n \in \omega$.
Consider the following variants of the thin set theorem.

\begin{definition}[Thin set theorem]
Given a function $f : [\omega]^k \to n$, an infinite set~$H$
is \emph{$f$-thin} if~$|f([H]^k)| \leq n-1$ (i.e. $f$ avoids one color over~$H$).
For every~$k \geq 1$ and~$n \geq 2$, $\ts^k_n$ is the statement
``Every function~$f : [\omega]^k \to n$ has an infinite $f$-thin set''.
$\sts^2_n$ is the restriction of~$\ts^2_n$ to stable colorings.
\end{definition}

Note that~$\ts^2_2$ is Ramsey's theorem for pairs.
The following theorem is sufficient to separate $\ts^2$ from Ramsey's theorem for pairs
as $\ts^2 \leq_c \ts^2_n$ for every~$n \geq 2$.
The proof of preservation is rather technical and is therefore proven in appendix.

\begin{theorem}\label{thm:ts2-hyperimmunity-preservation}
For every~$n \geq 1$, $\ts^2_{n+1}$ admits preservation of $n$ but not $n+1$ hyperimmunities.
\end{theorem}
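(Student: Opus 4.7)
The theorem splits into a negative part (no preservation of $n+1$ hyperimmunities) and a positive one (preservation of $n$), which I tackle in this order.

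For the negative part, I would carry out a finite-injury priority construction producing a computable stable function $f : [\omega]^2 \to n+1$ such that, setting $B_i = \{m : \lim_s f(m,s) \neq i\}$ for $i \leq n$, each $B_i$ is hyperimmune. This is essentially the $(n+1)$-color truncation of the construction underlying the preceding unnumbered theorem on $\sts^2$: the construction diagonalizes, for every pair $(e,i)$, against the $e$-th c.e.\ array hitting $B_i$ everywhere by steering the eventual values $\lim_s f(m,s)$ appropriately. Given an infinite $f$-thin set $H$, pick the color $i^{*} \leq n$ it avoids; then $H \subseteq B_{i^{*}}$ by stability, so enumerating the singletons $\{h\}$ for $h \in H$ gives an $H$-c.e.\ array of pairwise disjoint finite sets all meeting $B_{i^{*}}$, witnessing that $B_{i^{*}}$ is not $H$-hyperimmune.

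For the positive part, I would closely follow the structure of Theorem~\ref{thm:em-hyperimmunity}, exploiting preservation of hyperimmunity of $\wkl$. Fix a set $Z$, $Z$-hyperimmune sets $B_0, \ldots, B_{n-1}$, and a $Z$-computable instance $f : [\omega]^2 \to n+1$. I would use a parallel Mathias-style forcing whose conditions are tuples $(F_0, \ldots, F_n, X)$ such that each $F_c$ is $f$-thin avoiding color $c$, $X$ is an infinite reservoir with $F_c \cup \{x\}$ still thin avoiding $c$ for every $c$ and $x \in X$, and the $B_i$'s remain $X \oplus Z$-hyperimmune. Density arguments then force some $F_{c^{*}}$ to be infinite in the generic, yielding an $f$-thin solution.

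The central forcing lemma mirrors the extension lemma of Theorem~\ref{thm:em-hyperimmunity}. Given a condition, an index $i < n$ and a $\Sigma^{0,Z}_1$ formula $\varphi(G, U)$, I would build an extension forcing either that $\varphi(G,U)$ is not essential along any generic extension, or that $\varphi(E, A)$ holds with $A \subseteq \overline{B_i}$ finite. The auxiliary predicate is the $\Sigma^{0,X \oplus Z}_1$ formula $\psi(U)$ asserting: for every $(n+1)$-partition $X = X_0 \sqcup \cdots \sqcup X_n$ there exist $c \leq n$, a finite $G \subseteq X \setminus X_c$ with $F_c \cup G$ $f$-thin avoiding $c$, and $\tilde A \subseteq U$ such that $\varphi(F_c \cup G, \tilde A)$ holds. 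By $X \oplus Z$-hyperimmunity of $B_i$, either $\psi(A)$ holds for some finite $A \subseteq \overline{B_i}$, or $\psi$ is not essential. In the former case, compactness yields a finite $H \subseteq X$ witnessing $\psi(A)$ over all partitions of $H$; a secondary pigeonhole on the color profiles $y \mapsto (f(h,y))_{h \in H}$ (finitely many possibilities) extracts an infinite $Y \subseteq X$ whose elements induce a single partition of $H$, giving the required $c$, $G$, $\tilde A$ with $Y$ automatically compatible with the enlarged $F_c \cup G$ because the partition-complement in $\psi$ is tuned to the profile. In the latter case, the $\Pi^{0,X \oplus Z}_1$ class of bad $(n+1)$-partitions of $X$ is non-empty, so preservation of hyperimmunity of $\wkl$ (Theorem~\ref{thm:wkl-hyperimmunity}) supplies such a partition preserving all $B_j$'s, and pigeonhole selects an infinite part as the new reservoir. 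I expect the main technical obstacle to be the precise formulation of $\psi$ and of the induced partition of $H$ in the first case so that the color-profile pigeonhole automatically yields the needed compatibility; this is the $(n+1)$-color analogue of the ``$H_0 \to_T Y \to_T H_1$'' minimal-interval manoeuvre in the EM proof, and the arithmetic $n+1 > n$ is exactly what keeps this pigeonhole from collapsing.
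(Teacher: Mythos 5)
Your negative half matches the paper's intended argument: the paper cites the construction from~\cite{Patey2014Somewhere} of a computable stable $f : [\omega]^2 \to \omega$ with every $B_i = \{m : \lim_s f(m,s) \neq i\}$ hyperimmune, and truncating to $n+1$ colors yields what you describe; your observation that an infinite $f$-thin set is almost contained in some $B_{i^*}$ and hence enumerates a disjoint strong array hitting $B_{i^*}$ is exactly why this witnesses non-preservation.

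Your positive half takes a genuinely different route from the paper. The paper decomposes as in Cholak--Jockusch--Slaman: Lemma~\ref{lem:ts-strong-to-weak} uses preservation of hyperimmunity for $\coh$ to turn a $Z$-computable $f : [\omega]^2 \to n+1$ into a $\Delta^{0,G\oplus Z}_2$ colouring of singletons, and Theorem~\ref{thm:ts1-strong-preservation} proves \emph{strong} preservation of $n$ hyperimmunities for $\ts^1_{n+1}$ directly by Mathias forcing on a single set $G$, tracking the $n+1$ candidates $G \cap \overline{A_i}$ and using a pairwise step (Lemmas~\ref{lem:ts2-reduc-step} and~\ref{lem:ts2-reduc-force-R}) so that one index $i$ survives all $n$ hyperimmune sets. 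You instead attempt a Seetapun-style direct forcing on unreduced $\ts^2_{n+1}$ instances with parallel conditions $(F_0, \dots, F_n, X)$.

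This direct route has a genuine gap that the $\coh$ reduction is designed to sidestep. After extracting a finite $H \subseteq X$ from a witness of $\psi(A)$ and doing the colour-profile pigeonhole to obtain $Y \subseteq X$, the constant profile only controls pairs between $H$ and $Y$; nothing forces $f$ to avoid colour $c$ on $[G]^2$ or on $F_c \times G$ for the extracted $G \subseteq H$, even though your condition stipulates $F_c \cup G$ be $f$-thin avoiding $c$, and nothing forces the new reservoir $Y$ to satisfy the one-step-extension clause for the enlarged $F_c \cup G$ beyond the pair $(G,Y)$. You would either have to fold all of this into $\psi$ and reprove that the resulting $\Pi^{0}_1$ class is nonempty and $\wkl$-applicable, or iterate the pigeonhole inside $H$ --- both of which are where the real work of a direct Seetapun argument lives. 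Your sketch is also ambiguous about the cell constraint on $G$: you write $G \subseteq X \setminus X_c$, which is what compatibility with $Y$ demands in the essential case, but then in the non-essential case your reservoir is an infinite cell $X_{c^*}$, and the constraint ``$G \subseteq X \setminus X_c$'' says nothing about generics for $F_{c^*}$ growing inside $X_{c^*}$; you need to show the requirement is disjunctive over $c$ and that the non-essential case forces non-essentiality for the $n$ colours $c \neq c^*$, which is precisely where the arithmetic $n+1 > n$ enters. In the stable/$\ts^1$ setting all of these interactions disappear because only membership in a fixed co-cell $\overline{A_i}$ matters, and the ambient and reservoir cells coincide; that is what the paper's two-step decomposition buys you.
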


In the case~$n = 1$, noticing that the arithmetical comprehension scheme
($\aca$) does not preserve 1 hyperimmunities as witnessed by taking any $\Delta^0_2$ hyperimmune set,
we re-obtain the separation of Ramsey's theorem for pairs from $\aca$.
Hirschfeldt \& Jockusch~\cite{Hirschfeldtnotions} asked whether~$\ts^2_{n+1}$ implies $\ts^2_n$ over~$\rca$.
The author answered negatively in~\cite{Patey2015weakness}.
Preservation of $n$ hyperimmunities gives the same separation.

\begin{theorem}[Patey~\cite{Patey2015weakness}]
For every~$n \geq 2$,
let~$\Phi$ be the conjunction of~$\coh$, $\wkl$, $\rrt^2_2$,
$\pizog$, $\emo$, $\ts^2_{n+1}$.
Over~$\rca$, $\Phi$ does not imply any of~$\sads$ and $\sts^2_n$.
\end{theorem}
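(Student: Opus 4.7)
The plan is to upgrade Lemma~\ref{lem:hyperimmunity-separation} to the finite-witness setting and combine it with the preservation results already established in the paper. First, I would record the straightforward analogue: if a $\Pi^1_2$ statement $\Psf$ admits preservation of $n$ hyperimmunities and $\Qsf$ does not, then $\rca + \Psf \not\vdash \Qsf$. The proof is the standard Turing-ideal construction: starting from a $\Qsf$-instance $I$ whose failure is witnessed by $Z$-hyperimmune sets $A_0, \dots, A_{n-1}$ with $I \leq_T Z$, one iterates, handling each $\Psf$-instance computable from the current oracle by invoking preservation of $n$ hyperimmunities to extract a solution that keeps $A_0, \dots, A_{n-1}$ hyperimmune relative to the next oracle. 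The limit ideal is an $\omega$-model of $\Psf$ containing $I$ and excludes every solution to $I$, since such a solution would be an infinite subset of some $A_i$ and hence computably enumerate a witness to the non-hyperimmunity of $A_i$ relative to itself.

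Next, I would verify that $\Phi$ admits preservation of $n$ hyperimmunities; closure under conjunction is immediate, since one simply applies the preservations in sequence inside the iteration. For $\coh$, $\wkl$, and $\emo$, Theorems~\ref{thm:coh-fairness-preservation},~\ref{thm:wkl-hyperimmunity}, and~\ref{thm:em-hyperimmunity} give preservation of countably many hyperimmunities, hence of $n$. For $\rrt^2_2$ and $\pizog$, which are consequences of sufficient randomness and sufficient genericity respectively, Theorem~\ref{thm:typicality-hyperimmunity} supplies the preservation. The remaining conjunct $\ts^2_{n+1}$ is covered by the positive half of Theorem~\ref{thm:ts2-hyperimmunity-preservation}.

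It then remains to invoke the two negative results. For the first separation, Theorem~\ref{thm:sads-hyperimmunity} exhibits, via the Hirschfeldt--Shore--Slaman stable linear order, an $\sads$-instance whose $\omega$ and $\omega^*$ parts are both hyperimmune, so that any ascending or descending sequence destroys the hyperimmunity of one of them; thus $\sads$ fails preservation of $2$ hyperimmunities, and since $n \geq 2$, the iteration lemma yields $\Phi \not\vdash \sads$. For the second, the negative half of Theorem~\ref{thm:ts2-hyperimmunity-preservation} (applied with parameter $n-1$, and with the witnessing coloring chosen stable as in the earlier failure for $\sts^2$) shows that $\sts^2_n$ fails preservation of $n$ hyperimmunities, giving $\Phi \not\vdash \sts^2_n$. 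The only nontrivial point is checking that the negative direction of Theorem~\ref{thm:ts2-hyperimmunity-preservation} is indeed witnessed by a stable coloring, so that $\sts^2_n$ (and not merely $\ts^2_n$) fails; all remaining work is bookkeeping, with every combinatorial obstacle already absorbed into the earlier preservation theorems.
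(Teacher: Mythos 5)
Your proof is correct and assembles the pieces exactly as the paper intends: the iteration lemma (the finite-witness analogue of Lemma~\ref{lem:hyperimmunity-separation}), the component preservation theorems for each conjunct of~$\Phi$, and the two negative results (Theorem~\ref{thm:sads-hyperimmunity} for $\sads$ and the negative half of Theorem~\ref{thm:ts2-hyperimmunity-preservation} at parameter $n-1$ for $\sts^2_n$). The one point you flag — that the witness to the failure of preservation of $n$ hyperimmunities must be a \emph{stable} coloring so that the conclusion applies to $\sts^2_n$ and not merely $\ts^2_n$ — is indeed the only place requiring care, and it is handled by the $n$-color stable analogue of the construction cited from~\cite{Patey2014Somewhere} (a computable stable $f : [\omega]^2 \to n$ with $B_i = \{m : \lim_s f(m,s) \neq i\}$ all hyperimmune, so that every $f$-thin set is an infinite subset of some $B_i$). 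This matches the role this paper assigns the earlier $\sts^2$ failure.
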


\vspace{0.5cm}

\noindent \textbf{Acknowledgements}. 
The author is thankful to Wei Wang for useful comments and discussions.

\vspace{0.5cm}

\bibliographystyle{plain}

\clearpage
\appendix


\section{Preservation of hyperimmunity}

\begin{proof}[Lemma~\ref{lem:hyperimmune-fairness}]
Let~$D_0, D_1, \dots$ be a computable list of all finite sets.
\begin{itemize}
	\item
Fix some set~$Z$ and some $Z$-hyperimmune set~$B$.
For every essential $\Sigma^{0,Z}_1$ formula~$\varphi(U)$,
define the $Z$-computable function~$f$ inductively so that
$\varphi(D_{f(0)})$ holds and for every~$i$,
$D_{f(i+1)} > D_{f(i)}$ and~$\varphi(D_{f(i+1)})$ holds.
Because~$\varphi(U)$ is essential, the function~$f$ is total.
$\{ D_{f(i)}\}_{i \geq 0}$ is a $Z$-c.e.\ weak array, so by~$Z$-hyperimmunity,
$D_{f(i)} \cap B = \emptyset$ for some~$i$, hence $D_{f(i)} \subseteq \overline{B}$
and~$\varphi(D_{f(i)})$ holds.

	\item
Fix some set~$Z$ and some set~$B$ such that the fairness property of Lemma~\ref{lem:hyperimmune-fairness} holds.
For every $Z$-c.e.\ weak array $\{ D_{f(i)}\}_{i \geq 0}$, define the~$\Sigma^{0,Z}_1$ formula
$\varphi(U) = (\exists i)[U = D_{f(i)}]$. The formula~$\varphi(U)$ is essential, so
there exists some finite set~$A \subseteq \overline{B}$ such that~$\varphi(A)$ holds.
In particular, there exists some~$i$ such that~$D_{f(i)} \subseteq \overline{B}$. \qed
\end{itemize}
\end{proof}

\begin{proof}[Lemma~\ref{lem:hyperimmunity-separation}]
Fix a set~$X_0$, a countable collection of~$X_0$-hyperimmune sets~$B_0, B_1, \dots$
and an~$X_0$-computable $\Qsf$-instance~$J$ such that for every solution~$Y$ to~$J$,
one of the~$B$'s is not~$Y \oplus X_0$-hyperimmune.
By preservation of hyperimmunity of~$\Psf$ and carefully choosing
a sequence of $\Psf$-instances~$I_0, I_1, \dots$, we can define an infinite sequence
of sets~$X_1, X_2, \dots$ such that for each~$n \in \omega$
\begin{itemize}
	\item[(a)] $X_{n+1}$ is a solution to the~$\Psf$-instance~$I_n^{X_0 \oplus \dots \oplus X_n}$
	\item[(b)] The~$B$'s are $X_0 \oplus \dots \oplus X_n$-hyperimmune
	\item[(c)] For every~$X_0 \oplus \dots \oplus X_n$-computable $\Psf$-instance~$I$,
	there exists some~$m$ such that~$I = I_m^{X_0 \oplus \dots \oplus X_m}$.
\end{itemize}
Let~$\Mcal$ be the~$\omega$-structure whose second order part is
the Turing ideal
\[
\Ical = \{ Y : (\exists n)[ Y \leq_T X_0 \oplus \dots \oplus X_n] \}
\]
In particular, the~$\Qsf$-instance $J$ is in $\Ical$, but he~$B$'s are $Y$-hyperimmune for every~$Y \in \Ical$,
so $J$ has no solution~$Y \in \Ical$ and $\Mcal \not \models \Qsf$. 
By construction of~$\Ical$, every $\Psf$-instance~$I \in \Ical$
has a solution~$X_n \in \Ical$, so by Friedman~\cite{Friedman1974Some}, $\Mcal \models \rca \wedge \Psf$. \qed
\end{proof}

\section{Basic preservation proofs}

The proofs in this appendix are inspired by Wang's work in~\cite{Wang2014Definability}.

\begin{proof}[Item 1 of Theorem~\ref{thm:typicality-hyperimmunity}]
It suffices to prove that for every~$\Sigma^{0,Z}_1$ formula~$\varphi(G, U)$
and every~$i \in \omega$, the set of conditions~$\sigma$ forcing~$\varphi(G, U)$ not to be essential
or such that $\varphi(\sigma, A)$ holds for some finite set~$A \subset \overline{B_i}$ is dense.
Fix any string~$\sigma \in 2^{<\omega}$. Define
\[
\psi(U) = (\exists \tau \succeq \sigma)\varphi(\tau, U)
\]
The formula~$\psi(U)$ is~$\Sigma^{0,Z}_1$, so by $Z$-hyperimmunity of~$B_i$,
either~$\psi(U)$ is not essential, or~$\psi(A)$ holds for some finite set~$A \subseteq \overline{B_i}$.
If~$\psi(U)$ is not essential with witness $x \in \omega$,
then~$\sigma$ forces~$\varphi(G, U)$ not to be essential with the same witness.
If~$\psi(U)$ is essential, then there exists some finite set~$A \subset \overline{B_i}$ such that~$\psi(A)$ holds.
Unfolding the definition of~$\psi(A)$, there exists some~$\tau \succeq \sigma$ such that~$\varphi(\tau, A)$ holds.
The condition $\tau$ is an extension such that~$\varphi(\tau, A)$
holds for some~$A \subset \overline{B_i}$. \qed
\end{proof}

\begin{proof}[Item 2 of Theorem~\ref{thm:typicality-hyperimmunity}]
It suffices to prove that for every~$\Sigma^{0,Z}_1$ formula~$\varphi(G, U)$
and every~$i \in \omega$, the following class is Lebesgue null.
\[
\Scal = \{X : [\varphi(X,U) \mbox{ is essential }] \wedge (\forall A \subseteqfin \omega)\varphi(X,A) \imp A \not \subseteq \overline{B_i} \}
\]
Suppose it is not the case. There exists~$\sigma \in 2^{<\omega}$ such that
\[
\mu(X \in \Scal : \sigma \prec X) > 2^{-|\sigma|-1}
\]
Define
\[
\psi(U) = [\mu(X : (\exists \tilde{A} \subseteq U)\varphi(X,\tilde{A})) > 2^{-|\sigma|-1}]
\]
The formula~$\psi(U)$ is~$\Sigma^{0,Z}_1$ and by compactness, $\psi(U)$ is essential.
By~$Z$-hyperimmunity of~$B_i$, there exists some finite set~$A \subseteq \overline{B_i}$
such that~$\psi(A)$ holds. For every set~$A$ such that~$\psi(A)$ holds,
there exists some~$X \in \Scal$ and some~$\tilde{A} \subseteq A$
such that~$\varphi(X,\tilde{A})$ holds. By definition of~$X \in \Scal$, $\tilde{A} \not \subseteq \overline{B_i}$
and therefore~$A \not \subseteq \overline{B_i}$. Contradiction. \qed
\end{proof}

\begin{proof}[Theorem~\ref{thm:wkl-hyperimmunity}]
Fix some set~$Z$, some countable collection of~$Z$-hyperimmune sets~$B_0, B_1, \dots$
and some~$Z$-computable tree~$T \subseteq 2^{<\omega}$.
Our forcing conditions are~$(\sigma, R)$
where~$\sigma$ is a stem of the infinite, $Z$-computable tree~$R \subseteq T$.
A condition~$(\tau, S)$ \emph{extends} $(\sigma, R)$ if~$\sigma \preceq \tau$
and~$S \subseteq R$. The result is a direct consequence of the following lemma.

\begin{lemma}\label{lem:wkl-preservation-lemma}
For every condition~$c = (\sigma, R)$, every~$\Sigma^{0,Z}_1$ formula~$\varphi(G, U)$
and every~$i \in \omega$, there exists an extension~$d = (\tau, S)$ such that
$\varphi(P, U)$ is not essential for every path~$P \in [S]$, or $\varphi(\tau, A)$ holds
for some~$A \subseteq \overline{B_i}$.
\end{lemma}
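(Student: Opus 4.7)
The plan is to distill from $\varphi$ a single $\Sigma^{0,Z}_1$ predicate and apply Lemma~\ref{lem:hyperimmune-fairness} to the $Z$-hyperimmune set $B_i$. Since $\sigma$ is a stem of $R$, every $\tau \in R$ with $|\tau| \geq |\sigma|$ extends $\sigma$, so the formula
\[
\psi(U) \;=\; (\exists n \geq |\sigma|)(\forall \tau \in R \cap 2^n)(\exists \tilde{A} \subseteq U)\,\varphi(\tau, \tilde{A})
\]
is $\Sigma^{0,Z}_1$: its inner universal quantifier is bounded over the $Z$-decidable finite set $R \cap 2^n$. By Lemma~\ref{lem:hyperimmune-fairness} applied to $B_i$, either (a) $\psi(A)$ holds for some finite $A \subseteq \overline{B_i}$, or (b) $\psi$ is not essential, with some witness $x \in \omega$.

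In case (a), fix $n$ witnessing $\psi(A)$. Since $R$ is an infinite binary tree, König's lemma supplies some $\tau^{*} \in R \cap 2^n$ with $R^{[\tau^{*}]}$ infinite. Because $n \geq |\sigma|$, $\tau^{*} \succeq \sigma$ and $\tau^{*}$ is itself a stem of the infinite $Z$-computable tree $R^{[\tau^{*}]} \subseteq T$; hence $(\tau^{*}, R^{[\tau^{*}]})$ is a condition extending $(\sigma, R)$, and by choice of $n$ there is $\tilde{A} \subseteq A \subseteq \overline{B_i}$ with $\varphi(\tau^{*}, \tilde{A})$, giving the second alternative of the lemma.

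In case (b), fix a $Z$-decidable stage approximation $\Phi$ with $\varphi(G, U) \iff (\exists s)\,\Phi(G, U)[s]$ and $\Phi(G, U)[s]$ depending only on $G\uh s$, and set
\[
S \;=\; \{\tau \in R : \tau \preceq \sigma\}\,\cup\,\{\tau \in R : \sigma \preceq \tau \wedge (\forall \rho,\, \sigma \preceq \rho \preceq \tau)(\forall \tilde{A} \subseteq [x+1, |\tau|])\,\neg\Phi(\rho, \tilde{A})[|\tau|]\}.
\]
Then $S$ is $Z$-computable, downward-closed, and has $\sigma$ as a stem. If $S$ were finite, there would be $N \geq |\sigma|$ with $R \cap 2^N \cap S = \emptyset$, so every $\tau \in R \cap 2^N$ (which extends $\sigma$) would fail the second clause and thus witness some $\rho$ and $\tilde{A} \subseteq [x+1, N]$ with $\Phi(\rho, \tilde{A})[N]$; this gives $\varphi(\rho, \tilde{A})$, and a fortiori $\varphi(\tau, \tilde{A})$ by $\Sigma^0_1$ monotonicity in the string oracle, whence $\psi([x+1, N])$ holds with $[x+1, N] > x$, contradicting (b). Therefore $S$ is infinite and $(\sigma, S)$ extends $(\sigma, R)$. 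For any $P \in [S]$, an instance $\varphi(P, A)$ with $A > x$ would be witnessed by some $\rho \prec P$ at some stage $s$, and then any $\tau \prec P$ of length at least $\max(|\rho|, s, \max A)$ would violate membership in $S$; so $\varphi(P, U)$ is not essential, with uniform witness $x$.

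The main subtlety is the shape of $\psi$: anchoring it at a whole level $R \cap 2^n$, rather than at a single string, is what makes case (b) strong enough to prune $R$ into a $Z$-computable infinite subtree uniformly; a pointwise variant like $(\exists \tau \in R)[\tau \succeq \sigma \wedge \varphi(\tau, U)]$ would trivialize case (a) but leave case (b) too weak to define $S$. Once this choice is made, the remaining steps reduce to classical König-style compactness of $2^{<\omega}$ and the standard oracle-monotonicity of $\Sigma^0_1$ formulas, and critically do not appeal to preservation of hyperimmunity of $\wkl$ --- precisely the theorem being established.
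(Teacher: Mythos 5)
Your proof is correct and takes essentially the same route as the paper's: the key formula $\psi(U)$, universally quantifying over a full level $R\cap 2^n$ of the tree, is identical, as are the two cases and the extensions produced in each. You are somewhat more explicit than the paper about making the pruned tree $S$ manifestly $Z$-computable in the non-essential case via stage-bounded approximations (the paper writes $S=\{\tau\in R:(\forall A>x)\,\neg\varphi(\tau,A)\}$ and asserts $Z$-computability ``by compactness''), but this is a detail the paper leaves implicit rather than a different idea.
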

\begin{proof}
Define
\[
\psi(U) = (\exists s)(\forall \tau \in R \cap 2^s)(\exists \tilde{A} \subseteqfin U)\varphi(\tau, \tilde{A})
\]
The formula~$\psi(U)$ is~$\Sigma^{0,Z}_1$ so we have two cases:
\begin{itemize}
	\item Case 1: $\psi(U)$ is not essential with some witness~$x$. By compactness,
	the following set is an infinite $Z$-computable subtree of~$R$:
	\[
	S = \{ \tau \in R : (\forall A > x) \neg \varphi(\tau, A) \}
	\]
	The condition~$d = (\sigma, S)$ is an extension such that~$\varphi(P, U)$
	is not essential for every~$P \in [S]$.
	\item Case 2: $\psi(U)$ is essential. By~$Z$-hyperimmunity of~$B_i$,
	there exists some finite set~$A \subseteq \overline{B_i}$ such that~$\psi(A)$ holds.
	Unfolding the definition of~$\psi(A)$, there exists some~$\tau \in R$ such that~$R^{[\tau]}$ is infinite
	and~$\varphi(\tau, \tilde{A})$ holds for some~$\tilde{A} \subseteq A \subseteq \overline{B_i}$. The condition
	$d = (\tau, R^{[\tau]})$ is an extension such that~$\varphi(\tau, \tilde{A})$
	holds for some finite set~$\tilde{A} \subseteq \overline{B_i}$.
\end{itemize}
\end{proof}

Using Lemma~\ref{lem:wkl-preservation-lemma}, define an infinite descending sequence 
of conditions~$c_0 = (\epsilon, T) \geq c_1 \geq \dots$
such that for each~$s \in \omega$
\begin{itemize}
	\item[(i)] $|\sigma_s| \geq s$
	\item[(ii)] $\varphi(P, U)$ is not essential for every path~$P \in [R_{s+1}]$, 
	or $\varphi(\sigma_{s+1}, A)$ holds
	for some finite set~$A \subseteq \overline{B_i}$ if~$s = \tuple{\varphi, i}$
\end{itemize}
where~$c_s = (\sigma_s, R_s)$. \qed
\end{proof}

\section{Thin set for pairs and preservation of hyperimmunity}

All the proofs in this section are very similar to~\cite{Patey2015weakness}.
We reprove everything in the context of preservation of hyperimmunities
for the sake of completeness.

\begin{definition}[Strong preservation of $n$ hyperimmunities]\ 
A~$\Pi^1_2$ statement~$\Psf$ \emph{admits strong preservation of $n$ hyperimmunities} 
if for each set $Z$, each $Z$-hyperimmune sets $B_0, \dots, B_{n-1}$ and each (arbitrary) $\Psf$-instance $X$, 
there exists a solution $Y$ to~$X$ such that the~$B$'s are $Y \oplus Z$-hyperimmune.
\end{definition}

The following lemma has been proven by the author in its full generality in~\cite{PateyCombinatorial}.
We reprove it in the context of preservation of $n$ hyperimmunities.

\begin{lemma}\label{lem:ts-strong-to-weak}
For every~$k,n \geq 1$ and~$m \geq 2$, if~$\ts^k_m$ admits strong preservation of~$n$ hyperimmunities,
then $\ts^{k+1}_m$ admits preservation of~$n$ hyperimmunities.
\end{lemma}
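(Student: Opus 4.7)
The plan is to adapt the standard Cholak--Jockusch--Slaman style cohesive-plus-stable decomposition to the preservation setting, playing cohesiveness, the strong preservation hypothesis for $\ts^k_m$, and a simple thinning argument in sequence.

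Fix a set $Z$, $Z$-hyperimmune sets $B_0,\dots,B_{n-1}$, and a $Z$-computable coloring $f : [\omega]^{k+1} \to m$. The first step is to reduce $f$ to something ``stable'' on a cohesive set. Consider the $Z$-computable uniform sequence $\{R_{\sigma,i} : \sigma \in [\omega]^k, i < m\}$ where $R_{\sigma,i} = \{x > \max(\sigma) : f(\sigma,x) = i\}$. By Theorem~\ref{thm:coh-fairness-preservation}, $\coh$ admits preservation of hyperimmunity, so there is a cohesive set $C$ for this sequence such that the $B$'s remain $C \oplus Z$-hyperimmune. Cohesiveness gives, for each $\sigma \in [\omega]^k$, a unique color $\tilde{f}(\sigma) < m$ with $C \subseteq^{*} R_{\sigma,\tilde{f}(\sigma)}$.

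Next I would apply the strong preservation hypothesis. The function $\tilde{f} : [\omega]^k \to m$ is an \emph{arbitrary} (not necessarily $C \oplus Z$-computable) $\ts^k_m$-instance, and this is precisely why the hypothesis must be \emph{strong} preservation of $n$ hyperimmunities: relative to the oracle $C \oplus Z$, we obtain an infinite $\tilde{f}$-thin set $\tilde{H}$ such that the $B$'s are $\tilde{H} \oplus C \oplus Z$-hyperimmune. Pick a color $j < m$ which $\tilde{f}$ avoids on $[\tilde{H}]^k$.

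It remains to go back from $\tilde{H}$ to an $f$-thin set for $[\omega]^{k+1}$. I would construct $H = \{h_0 < h_1 < \dots\} \subseteq \tilde{H} \cap C$ by stages: having chosen $h_0,\dots,h_{s-1}$, let $h_s$ be the least element of $\tilde{H} \cap C$ with $h_s > h_{s-1}$ and $h_s \notin R_{\sigma,j}$ for every $\sigma \in [\{h_0,\dots,h_{s-1}\}]^k$. Such $h_s$ always exists because $\tilde{H} \cap C$ is infinite while $\bigcup_{\sigma \in [\{h_0,\dots,h_{s-1}\}]^k} R_{\sigma,j} \cap C$ is finite (each $C \cap R_{\sigma,j}$ is finite, since $\tilde{f}(\sigma) \neq j$ for $\sigma \in [\tilde{H}]^k$ and $C \subseteq^{*} R_{\sigma,\tilde{f}(\sigma)}$). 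By construction, $f(\sigma,y) \neq j$ for every $\sigma \in [H]^k$ and every $y \in H$ with $y > \max(\sigma)$, so $H$ is infinite and $f$-thin. Moreover the search is effective in $\tilde{H} \oplus C \oplus Z$, hence $H \leq_T \tilde{H} \oplus C \oplus Z$, so the $B$'s remain $H \oplus Z$-hyperimmune.

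The main obstacle I anticipate is the argument that the thinning stage preserves hyperimmunity: we need to check that all the computational detours (the cohesive set, the strong-preservation solution, the greedy thinning) compose so that $H \oplus Z$ is Turing below the oracle relative to which the $B$'s have already been preserved. The cohesive $+$ stable decomposition makes this bookkeeping clean, but one has to be careful that $\tilde{f}$ is treated as an arbitrary (not computable) instance, which is the sole place the strength of ``strong'' preservation is used.
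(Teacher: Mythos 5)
Your decomposition is exactly the paper's: apply $\coh$-preservation to get a cohesive set $C$, form the limit coloring $\tilde{f}:[\omega]^k \to m$, invoke \emph{strong} preservation of $\ts^k_m$ on $\tilde{f}$ (and you correctly pinpoint that this is the sole place the strength of ``strong'' is used, since $\tilde{f}$ is only $(C\oplus Z)'$-computable), and then thin back to an $f$-thin set. The paper disposes of the last step in one line; you spell it out, which exposes a genuine gap.

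The gap is the unjustified assertion that $\tilde{H}\cap C$ is infinite. Strong preservation of $\ts^k_m$ hands you an infinite $\tilde{f}$-thin set $\tilde{H}$ with the stated hyperimmunity preservation, but it gives no control whatsoever over where $\tilde{H}$ lives relative to $C$: it could meet $C$ only finitely, or not at all. Your greedy thinning needs both $h_s\in C$ (so that the finiteness of $R_{\sigma,j}\cap C$ is useful) and $h_s\in\tilde{H}$ (so that tuples $\sigma$ formed from earlier $h_i$'s lie in $[\tilde{H}]^k$ and hence satisfy $\tilde{f}(\sigma)\neq j$). These two constraints only coexist inside $\tilde{H}\cap C$, and nothing forces that set to be infinite, so the search for $h_s$ can fail.

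The standard fix is to pull back to $C$ \emph{before} invoking strong preservation. Let $p$ be the increasing enumeration of $C$ (so $p\leq_T C\oplus Z$) and define $\tilde{g}(\sigma)=\tilde{f}(p(\sigma_1),\dots,p(\sigma_k))$; equivalently, set $\hat{f}(\vec{x})=f(p(x_1),\dots,p(x_{k+1}))$, which is a \emph{stable} coloring, and take $\tilde{g}=\lim_s\hat{f}(\cdot,s)$. Apply strong preservation of $\ts^k_m$ to $\tilde{g}$ over $C\oplus Z$, obtaining $\tilde{H}$ avoiding some color $j$ with the $B$'s $\tilde{H}\oplus C\oplus Z$-hyperimmune. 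Then $p(\tilde{H})\subseteq C$ is infinite, and for every $\tau\in[p(\tilde{H})]^k$ one has $\tilde{f}(\tau)\neq j$, so $R_{\tau,j}\cap C$ (hence $R_{\tau,j}\cap p(\tilde{H})$) is finite. Your greedy construction now runs inside $p(\tilde{H})$ without obstruction, producing an $f$-thin set $H\leq_T\tilde{H}\oplus C\oplus Z$ with the $B$'s $H\oplus Z$-hyperimmune. The same composition with $p$ is what must be read into the paper's terse claim ``$H\oplus G\oplus Z$ computes an infinite $f$-thin set''; as soon as one spells out the thinning, as you did, the pullback cannot be omitted.
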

\begin{proof}
Fix any set~$Z$, some~$Z$-hyperimmune sets~$B_0, \dots, B_{n-1}$ and any $Z$-computable
coloring $f : [\omega]^{k+1} \to m$.
Consider the uniformly~$Z$-computable sequence of sets~$R_{\sigma,i}$ defined for each~$\sigma \in [\omega]^k$ and~$i < m$ by
\[
R_{\sigma,i} = \{s \in \omega : f(\sigma,s) = i\}
\]
As~$\coh$ admits preservation of~$n$ hyperimmunities, there exists
some~$\vec{R}$-cohesive set~$G$ such that the~$B$'s are $G \oplus Z$-hyperimmune.
The cohesive set induces a $(G \oplus Z)'$-computable coloring~$\tilde{f} : [\omega]^k \to m$ defined by:
\[
(\forall \sigma \in [\omega]^k) \tilde{f}(\sigma) = \lim_{s \in G} f(\sigma,s)
\]
As~$\ts^k_m$ admits strong preservation of $n$ hyperimmunities,
there exists an infinite $\tilde{f}$-thin set~$H$ such that
the~$B$'s are $H \oplus G \oplus Z$-hyperimmune.
$H \oplus G \oplus Z$ computes an infinite $f$-thin set.
\end{proof}

Thanks to Lemma~\ref{lem:ts-strong-to-weak}, it suffices
to prove the following theorem to deduce Theorem~\ref{thm:ts2-hyperimmunity-preservation}.

\begin{theorem}\label{thm:ts1-strong-preservation}
For every~$n \geq 1$, $\ts^1_{n+1}$ admits preservation of $n$ hyperimmunities.
\end{theorem}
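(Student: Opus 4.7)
The plan is to prove strong preservation of $n$ hyperimmunities for $\ts^1_{n+1}$ by a product Mathias-style forcing inspired by the proof of Theorem~\ref{thm:em-hyperimmunity}, exploiting the pigeonhole between $n{+}1$ candidate $f$-thin sets and only $n$ hyperimmune sets to preserve. Fix an arbitrary $f : \omega \to n{+}1$, a set $Z$, and $Z$-hyperimmune sets $B_0, \dots, B_{n-1}$. Let $A_j = f^{-1}(j)$ and consider conditions $c = (F_0, \dots, F_n, X)$ where each $F_j$ is a finite subset of $\overline{A_j}$, $X$ is an infinite reservoir with $\min X > \max \bigcup_j F_j$, and the $B_i$'s are $X \oplus Z$-hyperimmune; extensions are Mathias-style in each coordinate, adding finitely many elements of $X \cap \overline{A_j}$ to $F_j$. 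A generic yields sets $G_j = \bigcup_s F_j^{(s)} \subseteq \overline{A_j}$, each $f$-thin (avoiding color $j$).

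Infinitude of some $G_j$ is nearly free: since the $A_j$'s partition $\omega$, at most one of the slices $X \cap A_j$ can be cofinite in $X$, so at least $n$ values of $j$ have $X \cap \overline{A_j}$ infinite, allowing $G_j$ to be forced infinite for such ``live'' $j$. For the at-most-one ``dead'' $j$ whose $G_j$ is forced finite, preservation of the $B_i$'s relative to $G_j \oplus Z$ is automatic since a specific finite set is Turing-computable, so $G_j \oplus Z \equiv_T Z$. The core step is a density lemma: for every essential $\Sigma^{0,Z}_1$ formula $\varphi(G,U)$ and every $i < n$, there is an extension such that for each $j \leq n$, either (a) $\varphi(G,U)$ is forced not to be essential for $G$ extending $F'_j$ within the forcing, or (b) $\varphi(F'_j, A)$ holds for some finite $A \subseteq \overline{B_i}$. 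By Lemma~\ref{lem:hyperimmune-fairness}, this suffices to preserve each $B_i$'s hyperimmunity.

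I would prove the density lemma by iterating over $j$. For fixed $j$, introduce the $\Sigma^{0, X \oplus Z}_1$ formula $\psi_j(U)$ expressing ``for every $(n{+}1)$-coloring $g : X \to \{0, \dots, n\}$ there exist a finite $E \subseteq g^{-1}(\{0, \dots, n\} \setminus \{j\}) \cap X$ and a finite $\tilde{A} \subseteq U$ such that $\varphi(F_j \cup E, \tilde{A})$ holds'', where compactness on the compact space of colorings collapses the outer $\forall g$ into a $\Sigma^0_1$ statement. Invoke $X \oplus Z$-hyperimmunity of $B_i$ applied to $\psi_j$: if $\psi_j(A)$ holds for some $A \subseteq \overline{B_i}$, applying the witness to the real coloring $g = f \restr X$ produces $E \subseteq X \cap \overline{A_j}$ which can be added to $F_j$, giving (b); otherwise $\psi_j$ is not essential with witness $x_j$, and the non-empty $\Pi^{0, X \oplus Z}_1$ class $\mathcal{C}_j$ of witnessing colorings admits, via preservation of hyperimmunity of $\wkl$ (Theorem~\ref{thm:wkl-hyperimmunity}), an element $g$ with the $B_i$'s still $g \oplus X \oplus Z$-hyperimmune, and shrinking the reservoir to $X' = g^{-1}(\{0, \dots, n\} \setminus \{j\}) \cap X$ establishes (a).

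The main obstacle will be the corner case in which the WKL-given $g$ has $g^{-1}(\{0, \dots, n\} \setminus \{j\}) \cap X$ finite, so that the shrunken reservoir is not a valid infinite one; this is forced, for instance, when $g$ is the trivial constant coloring $g \equiv j$, which always lies in $\mathcal{C}_j$ by vacuous quantification. I plan to address this by a secondary application of $B_i$-hyperimmunity: the trivial witness $g \equiv j$ in $\mathcal{C}_j$ precisely says that $\varphi(F_j, U)$ is not essential, and restricting the universal quantifier in $\psi_j$ to colorings with $|g^{-1}(\{0, \dots, n\} \setminus \{j\}) \cap X| \geq k$ yields, for each $k$, a $\Sigma^0_1$ formula to which hyperimmunity can be re-applied; either some refined version is essential and (b) follows via $g = f$, or the nested $\Pi^0_1$ classes are all non-empty and a compactness/König argument on the compact space of colorings produces a $g \in \mathcal{C}_j$ with $g^{-1}(\{0, \dots, n\} \setminus \{j\}) \cap X$ genuinely infinite, salvaging (a). Combined with the infinitude analysis above, the iterated density lemma produces a generic whose $G_j$, for some live $j$, is infinite, $f$-thin, and preserves every $B_i$'s hyperimmunity, yielding the theorem.
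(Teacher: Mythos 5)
Your setup --- product Mathias conditions sharing a reservoir, per-colour stems $F_j$, and a $\Sigma^{0,X\oplus Z}_1$ formula $\psi_j(U)$ quantifying over all $(n{+}1)$-colourings of $X$ followed by a $\wkl$ step in the not-essential case --- is close in spirit to the paper's proof, and you correctly identify that the $\wkl$ step could return a colouring $g$ with $g^{-1}(\{0,\dots,n\}\setminus\{j\})\cap X$ finite. The fix you propose, however, does not go through. The key defect is that restricting the universal quantifier in $\psi_j$ to colourings satisfying $|g^{-1}(\{0,\dots,n\}\setminus\{j\})\cap X|\geq k$ replaces the full compact space $(n{+}1)^X$ by an \emph{open}, hence non-compact, subset, and it is precisely compactness that collapses $\forall g(\dots)$ into a $\Sigma^{0,X\oplus Z}_1$ statement. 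The negation of $\psi^k_j(U)$ asserts nonemptiness of a $\Pi^{0,X\oplus Z\oplus U}_1$ class intersected with an open set, which is a $\Sigma^0_2$ matter; so $\psi^k_j$ is not $\Sigma^0_1$ and Lemma~\ref{lem:hyperimmune-fairness} cannot be re-applied to it. Moreover, the not-essential witnesses $x^k_j$ need not stabilize as $k$ grows (larger $k$ weakens $\psi^k_j$, so the minimal witnesses are non-decreasing), so even granting a dichotomy the ``nested $\Pi^0_1$ classes'' you invoke are cut at different thresholds and a single suitable $g$ does not simply fall out of K\"onig's lemma.

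The paper avoids the corner case entirely by a different combinatorial device: Lemma~\ref{lem:ts2-reduc-step} works with one hyperimmune set $B_j$ and one \emph{pair} of colours $i_0<i_1$ at a time, and its $\Sigma^{0,X\oplus Z}_1$ formula $\psi(U)$ quantifies over all $2$-partitions $X_{i_0}\cup X_{i_1}=X$, a \emph{full} compact space; in Case~2 one of the two halves of any $2$-partition of the infinite set $X$ is automatically infinite and serves as the new reservoir, so there is no corner case at all. The price is that each application settles only one pair and one $B_j$, so Lemma~\ref{lem:ts2-reduc-force-R} iterates on the order of $n^2$ times and then pigeonholes, and the preserved colour $i$ is extracted only at the end through the disjunctive requirement $\Rcal_{\vec{e}}$ indexed by a tuple of formula indices (rather than forcing preservation for every colour simultaneously, as your density lemma attempts). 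You should rebuild your density step around this pair-wise $2$-partition formula rather than the $(n{+}1)$-colouring one.
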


The remainder of this section is devoted to the proof of Theorem~\ref{thm:ts1-strong-preservation}.
Fix some set~$Z$, some~$Z$-hyperimmune sets~$B_0, \dots, B_{n-1}$
and some $(n+1)$-partition~$A_0 \cup \dots \cup A_n = \omega$.
We will construct an infinite set~$G$ such that $G \cap \overline{A_i}$
is infinite for each~$i \leq n$ and the $B$'s are $(G \cap \overline{A_i}) \oplus Z$-hyperimmune
for some~$i \leq n$.
Our forcing conditions are Mathias conditions~$(F, X)$ 
such that the $B$'s are $X \oplus Z$-hyperimmune.

\subsection{Forcing limitlessness}

We want to satisfy the following scheme of requirements to ensure that~$G \cap \overline{A_i}$
is infinite for each~$i \leq n$.

\[
\Qcal_p : (\exists m_0, \dots, m_n > p)[m_0 \in G \cap \overline{A_0} \wedge \dots \wedge m_n \in G \cap \overline{A_n}]
\]

We say that an~$(n+1)$-partition~$A_0 \cup \dots \cup A_{n-1} = \omega$ is \emph{non-trivial} 
if there exists no infinite $f$-thin set~$H$ such that the $B$'s are $H \oplus Z$-hyperimmune.
A condition~$(F, X)$ \emph{forces $\Qcal_p$}
if there exists~$m_0, \dots, m_n > p$ such that~$m_i \in F \cap \overline{A_i}$
for each~$i \leq n$. Therefore, if~$G$ satisfies~$c$ and~$c$ forces~$\Qcal_p$,
then~$G$ satisfies the requirement~$\Qcal_p$.
We now prove that the set of conditions forcing~$\Qcal_p$ is dense for each~$p \in \omega$.
Therefore, every sufficiently generic filter will induce~$n+1$ infinite sets~$G \cap \overline{A_0}, \dots, G \cap \overline{A_n}$.

\begin{lemma}\label{lem:ts2-reduc-force-Q}
For every condition~$c$ and every~$p \in \omega$, there is a condition~$d$
extending~$c$ such that~$d$ forces~$\Qcal_p$.
\end{lemma}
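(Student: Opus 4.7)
The plan is to exploit the assumed non-triviality of the partition, which prevents the reservoir from being concentrated in any single $A_i$, and then rely on the fact that Mathias reservoirs survive trimming by a finite initial segment.

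I would begin by assuming without loss of generality that the partition $A_0 \cup \cdots \cup A_n = \omega$ is non-trivial, since otherwise some infinite $f$-thin set already preserves the hyperimmunity of the $B$'s and no forcing is needed at all. Setting $X_0 = X \cap (p, \infty)$, I would observe that $X_0 =^{*} X$, so $X_0$ is infinite and the $B$'s remain $X_0 \oplus Z$-hyperimmune.

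The crucial step is to show that $X_0 \cap \overline{A_i}$ is nonempty for every $i \leq n$. If $X_0 \subseteq A_i$ for some $i$, then $X_0$ avoids every color $j \neq i$, so $X_0$ would be an infinite $f$-thin set whose join with $Z$ preserves the hyperimmunity of the $B$'s, contradicting non-triviality. I would then pick some $m_i \in X_0 \cap \overline{A_i}$ for each $i \leq n$, let $E = F \cup \{m_0, \ldots, m_n\}$ and $Y = X \cap (\max(E), \infty)$, and check that $d = (E, Y)$ extends $c$. The preservation condition transfers from $X$ to $Y$ because $Y =^{*} X$, so $d$ is a valid condition of our forcing, and by construction each $m_i$ lies in $E \cap \overline{A_i}$ above $p$, so $d$ forces $\Qcal_p$.

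There is no real obstacle here: the entire content of the proof is the one-line contradiction with non-triviality, and everything else is a direct construction with no combinatorial subtlety.
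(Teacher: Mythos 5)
Your proof is correct and uses essentially the same idea as the paper: both derive the existence of a suitable $m_i \in X \cap \overline{A_i}$ above $p$ from the fact that otherwise a tail of the reservoir $X$ would itself be an infinite $f$-thin set while still preserving the hyperimmunity of the $B$'s, contradicting the non-triviality assumption. The only cosmetic difference is that the paper adds one $m_i$ at a time (iterating $n+1$ extension steps), whereas you collect all $m_0,\dots,m_n$ from the trimmed reservoir in a single extension, but this changes nothing of substance.
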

\begin{proof}
Fix some~$p \in \omega$. It is sufficient to show that for a condition~
$c = (F, X)$ and some~$i \leq n$,
there exists an extension~$d_0 = (H, Y)$ and some integer~$m_i > p$
that~$m_i \in H \cap \overline{A_i}$.
By iterating the process for each~$i \leq n$, we obtain the desired extension~$d$.
Suppose for the sake of contradiction that~$X \cap \overline{A_i}$ is finite.
Then one can $X$-compute an infinite set~$H$ thin for the~$A$'s with witness $j$ for any~$j \neq i$, 
contradicting non-triviality of~$f$.
Therefore, there exists an~$m_i \in X \cap \overline{A_i}$, $m_i > p$.
The condition $d_0 = (F \cup \{m_i\}, X \setminus [0, m_i] )$
is the desired extension. \qed
\end{proof}

\subsection{Forcing non-preservation}

Fix an enumeration~$\varphi_0(G, U), \varphi_1(G, U), \dots$ of all~$\Sigma^{0,Z}_1$ formulas.
The second scheme of requirements consists in ensuring that
the sets~$B_0, \dots, B_{n-1}$ are all $G \cap \overline{A_i}$-hyperimmune for some~$i \leq n$.
The requirements are of the following form for each~$\vec{e}$.

\[
\Rcal_{\vec{e}} : \bigwedge_{j < n} \Rcal^{A_0, B_j}_{e_0}
	\vee \dots \vee \bigwedge_{j < n} \Rcal^{A_n, B_j}_{e_n}
\]
where
\[
\Rcal^{A_i, B_j}_e : \varphi_e(G \cap \overline{A_i}, U) \mbox{ essential } 
	\Rightarrow (\exists A \subseteqfin \overline{B_j}) \varphi_e(G \cap \overline{A_i}, A)
\]

A condition~\emph{forces $\Rcal_{\vec{e}}$} if every set~$G$ satisfying this condition also satisfies 
the requirement~$\Rcal_{\vec{e}}$.

\begin{lemma}\label{lem:ts2-reduc-step}
For every condition~$c = (F, X)$, every~$i_0 < i_1 \leq n$,
every $j < n$ and every indices~$\vec{e}$, there exists
an extension~$d$ such that for some~$i \in \{i_0, i_1\}$, $d$ forces~$\varphi_{e_i}(G \cap \overline{A_i}, U)$ not to be essential
or forces~$\varphi_{e_i}(G \cap \overline{A_i}, A)$ for some finite set~$A \subseteq \overline{B_j}$.
\end{lemma}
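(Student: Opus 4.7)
The plan is to distill the two-index choice into a single application of $X \oplus Z$-hyperimmunity of $B_j$ by packing both candidate extensions into one $\Sigma^0_1$ formula that also quantifies over every possible finite partition pattern on the reservoir. Treating the (unknown but finite) sets $F_\alpha = F \cap \overline{A_{i_\alpha}}$ as parameters, I consider
\[
\psi(U) = (\exists E \subseteqfin X)(\forall p : E \to \{0, \dots, n\})(\exists \alpha < 2)(\exists \tilde{A} \subseteq U)\, \varphi_{e_{i_\alpha}}\bigl(F_\alpha \cup \{y \in E : p(y) \neq i_\alpha\}, \tilde{A}\bigr).
\]
Since $F$ is finite the parameters do not affect hyperimmunity, and the inner $\forall p$ ranges over a finite set, so $\psi$ is effectively $\Sigma^{0, X \oplus Z}_1$. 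Hyperimmunity of $B_j$ splits into two cases.

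If $\psi(A)$ holds for some finite $A \subseteq \overline{B_j}$, I fix such a witness $E$ and instantiate the universal quantifier at the restriction to $E$ of the true partition; this produces $\alpha < 2$ and $\tilde{A} \subseteq A$ with $\varphi_{e_{i_\alpha}}((F \cup E) \cap \overline{A_{i_\alpha}}, \tilde{A})$. The extension $d = (F \cup E,\, X \setminus [0, \max E])$ then forces $\varphi_{e_{i_\alpha}}(G \cap \overline{A_{i_\alpha}}, \tilde{A})$ by $\Sigma^0_1$-monotonicity, giving the witness alternative of the lemma for $i = i_\alpha$.

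If instead $\psi$ is not essential with witness $x$, the set of labelings $p^* : X \to \{0, \dots, n\}$ such that, for every $E \subseteqfin X$, every $\alpha < 2$, and every finite $\tilde{A} > x$,
\[
\neg\, \varphi_{e_{i_\alpha}}\bigl(F_\alpha \cup \{y \in E : p^*(y) \neq i_\alpha\}, \tilde{A}\bigr)
\]
holds forms a nonempty $\Pi^{0, X \oplus Z}_1$ class (nonemptiness uses $\Sigma^0_1$-monotonicity to propagate badness from larger to smaller $E$, combined with König's lemma). By preservation of hyperimmunity for $\wkl$ (Theorem~\ref{thm:wkl-hyperimmunity}) I pick such a $p^*$ keeping the $B$'s $p^* \oplus X \oplus Z$-hyperimmune. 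As $\{y \in X : p^*(y) = i_0\}$ and $\{y \in X : p^*(y) = i_1\}$ are disjoint subsets of the infinite set $X$, some $\alpha < 2$ makes $X'_\alpha = \{y \in X : p^*(y) \neq i_\alpha\}$ infinite, and $d = (F, X'_\alpha)$ is a valid extension. For any $G$ satisfying $d$ and any finite $T \subseteq G \cap \overline{A_{i_\alpha}}$, setting $E = T \cap X \subseteq X'_\alpha$ gives $T \subseteq F_\alpha \cup \{y \in E : p^*(y) \neq i_\alpha\}$; badness of $p^*$ and $\Sigma^0_1$-monotonicity then yield $\neg\,\varphi_{e_{i_\alpha}}(T, \tilde{A})$ for every $\tilde{A} > x$, forcing $\varphi_{e_{i_\alpha}}(G \cap \overline{A_{i_\alpha}}, U)$ to be not essential.

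The main difficulty lies in the design of $\psi$: it must remain $\Sigma^0_1$ in $X \oplus Z$ (so the inner partition quantifier has to be finite, indexed by $E$) yet be strong enough that in the essential case a witness still fires $\varphi$ \emph{after} restriction to $\overline{A_{i_\alpha}}$, and in the not-essential case the associated $\Pi^0_1$ class of bad labelings can, via $\wkl$ preservation, carve out a reservoir on which every finite $T \subseteq G \cap \overline{A_{i_\alpha}}$ is dominated by a bad finite instance. Baking $F_\alpha$ into $\psi$ as a parameter is what decouples the construction from the unknown true partition of $F$ and lets a single-step forcing replace the heavy iterated forcing of Lerman, Solomon and Towsner.
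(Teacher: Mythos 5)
Your proof is correct and follows essentially the same strategy as the paper: package the two-index dichotomy into a single $\Sigma^{0,X\oplus Z}_1$ formula $\psi(U)$ whose inner universal quantifier ranges over candidate splittings of the reservoir, invoke $X\oplus Z$-hyperimmunity of $B_j$ to split on whether $\psi$ is essential, and in the non-essential case apply preservation of hyperimmunity for $\wkl$ to a nonempty $\Pi^{0,X\oplus Z}_1$ class of bad splittings to obtain the new reservoir. The only cosmetic difference is that you quantify over $(n+1)$-labelings of a finite $E\subseteq X$ and read off $\{y\in E: p(y)\neq i_\alpha\}$, whereas the paper quantifies over $2$-partitions $X_{i_0}\cup X_{i_1}=X$ and appeals to compactness; both instantiations yield the same extension in each case.
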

\begin{proof}
Let~$\psi(U)$ be the formula which holds if for every $2$-partition~$X_{i_0} \cup X_{i_1} = X$, there is some $i \in \{i_0,i_1\}$
and some set~$G_i \subseteq X_i$ such that~$\varphi_{e_i}((F \cap \overline{A_i}) \cup G_i, \tilde{A})$ 
holds for some~$\tilde{A} \subseteq U$. By compactness, the formula~$\psi(U)$ is~$\Sigma^{0,X \oplus Z}_1$. We have two cases:
\begin{itemize}
	\item Case 1:  $\psi(U)$ is essential. As~$B_j$ is $X \oplus Z$-hyperimmune, there exists some
	finite set~$A \subseteq \overline{B_j}$ such that~$\psi(A)$ holds.
	In particular, taking~$X_i = X \cap \overline{A_i}$ for each~$i \in \{i_0, i_1\}$,
	there exists some~$i \in \{i_0, i_1\}$ and some finite set~$G_i \subseteq X_i$
	such that~$\varphi_{e_i}((F \cap \overline{A_i}) \cup G_i, \tilde{A})$ holds for some~$\tilde{A} \subseteq A$.
	The condition~$d = (F \cup G_i, X \setminus [0, max(G_i)])$
	is an extension forcing~$\varphi_{e_i}(G \cap \overline{A_i}, A)$ for some finite set~$A \subseteq \overline{B_j}$

	\item Case 2: $\psi(U)$ is not essential, say with witness~$x$. By compactness, the~$\Pi^{0,X \oplus Z}_1$ class~$\Ccal$
	of sets~$X_{i_0} \oplus X_{i_1}$ such that $X_{i_0} \cup X_{i_1} = X$ and for every~$A > x$, every~$i \in \{i_0, i_1\}$
	and every set~$G_i \subseteq X_i$, $\varphi_{e_i}((F \cap \overline{A_i}) \cup G_i, A)$ does not hold
	is not empty. By preservation of hyperimmunity of~$\wkl$,
	there exists some~$X_{i_0} \oplus X_{i_1} \in \Ccal$ such that
	the~$B$'s are~$X_{i_0} \oplus X_{i_1} \oplus Z$-hyperimmune.
	Let~$i \in \{i_0, i_1\}$ be such that~$X_i$ is infinite.
	The condition~$d = (F, X_i)$ is an extension of~$c$ forcing~$\varphi_{e_i}(G \cap \overline{A_i}, U)$
	not to be essential. \qed
\end{itemize}
\end{proof}

\begin{lemma}\label{lem:ts2-reduc-force-R}
For every condition~$c$,
and every indices~$\vec{e}$, there exists
an extension~$d$ forcing~$\Rcal_{\vec{e}}$.
\end{lemma}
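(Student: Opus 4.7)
The plan is to apply Lemma~\ref{lem:ts2-reduc-step} iteratively, using a tournament-style elimination to ensure that, for each row $j < n$, all but one of the $n+1$ columns $i \leq n$ are forced to satisfy $\Rcal^{A_i, B_j}_{e_i}$, and then to conclude by a pigeonhole argument on the columns.

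Fix $c$ and $\vec{e}$. I would process rows $j = 0, 1, \ldots, n-1$ sequentially. For a fixed $j$, starting from the current extension, I maintain one ``pending'' index $b \leq n$ and successively fill the others as follows. First I apply Lemma~\ref{lem:ts2-reduc-step} with $i_0 = 0$, $i_1 = 1$, and row~$j$; this produces an extension that forces $\Rcal^{A_i, B_j}_{e_i}$ for some $i \in \{0,1\}$, and I set $b$ to the other element. Then, for $k = 1, 2, \ldots, n-1$ in turn, I apply the lemma to the pair $(b, k+1)$ with row~$j$ (the ordering $b < k+1$ being automatic since $b \leq k$ by induction), yielding an extension that forces the requirement for one of $\{b, k+1\}$; I update $b$ to the other. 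A routine induction shows that at the end of row~$j$ the $n$ indices other than the final value of $b$ have each been forced, so exactly one column $\beta_j \leq n$ remains uncertain for row~$j$. Crucially, every step is an extension of the previous condition, so any requirement already forced in earlier rows is automatically preserved.

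After all $n$ rounds I obtain a final extension $d$ of $c$ together with at most $n$ uncertain indices $\beta_0, \ldots, \beta_{n-1}$ inside the $(n+1)$-element set $\{0, 1, \ldots, n\}$. Pigeonhole yields some $i^* \leq n$ outside $\{\beta_0, \ldots, \beta_{n-1}\}$, and by construction $d$ forces $\Rcal^{A_{i^*}, B_j}_{e_{i^*}}$ for every $j < n$. Hence $d$ forces the $i^*$-th disjunct of $\Rcal_{\vec{e}}$, as required.

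The main thing to verify carefully, though it is routine bookkeeping rather than a genuine obstacle, is the tournament invariant that after step $k$ of row $j$ the filled columns are exactly $\{0, 1, \ldots, k+1\} \setminus \{b\}$. This splits on whether Lemma~\ref{lem:ts2-reduc-step} fills the old pending $b$ (so the new column $k+1$ becomes pending) or fills $k+1$ (so $b$ is unchanged); in either case exactly one previously-unfilled column becomes filled at that step, which keeps the invariant and guarantees the pigeonhole count of $n$ filled versus $1$ uncertain column per row.
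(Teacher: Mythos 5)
Your proof is correct and follows essentially the same approach as the paper: both apply Lemma~\ref{lem:ts2-reduc-step} iteratively to force the requirement for $n$ of the $n+1$ columns in each of the $n$ rows, then conclude by pigeonhole that some column is forced for every row. The paper leaves the iteration scheme implicit (``apply iteratively''); your tournament-with-pending-index bookkeeping is simply a correct, explicit instantiation of it, and the observation that the pending index $b$ stays $\leq k$ (so the lemma's hypothesis $i_0 < i_1$ is always met) is the right detail to check.
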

\begin{proof}
Fix a condition~$c$, and apply iteratively Lemma~\ref{lem:ts2-reduc-step}
to obtain an extension~$d$ such that for each~$j < n$, 
$d$ forces~$\varphi_{e_i}(G \cap \overline{A_i}, U)$ not to be essential or
forces $\varphi_{e_i}(G \cap \overline{A_i}, A)$ for some finite set~$A \subseteq \overline{B_j}$ for $n$ different~$i$'s.
By the pigeonhole principle, there exists some~$i \leq n$
such that $d$ forces~$\varphi_{e_i}(G \cap \overline{A_i}, U)$ not to be essential or
forces $\varphi_{e_i}(G \cap \overline{A_i}, A)$ for some finite set~$A \subseteq \overline{B_j}$ for each~$j < n$.
Therefore, $d$ forces~$\Rcal_{\vec{e}}$. \qed
\end{proof}

\subsection{Construction}

Thanks to Lemma~\ref{lem:ts2-reduc-force-R} and Lemma~\ref{lem:ts2-reduc-force-Q}, 
define an infinite descending sequence of conditions
$c_0 = (\emptyset, \omega) \geq c_1 \geq \dots$ such that for each~$s \in \omega$,
\begin{itemize}
	\item[(a)] $c_{s+1}$ forces~$\Rcal_{\vec{e}}$ if~$s = \tuple{\vec{e}}$
	\item[(b)] $c_{s+1}$ forces~$\Qcal_s$
\end{itemize}
where~$c_s = (F_s, X_s)$.
Let~$G = \bigcup_s F_s$. The sets~$G \cap \overline{A_0}, \dots, G \cap \overline{A_n}$ are all infinite
and the~$B$'s are $(G \cap \overline{A_i}) \oplus Z$-hyperimune for some~$i \leq n$.
This finishes the proof.

\end{document}